\documentclass[a4paper]{cas-sc}
\usepackage[authoryear,longnamesfirst]{natbib}

\usepackage{mathtools,cleveref,times,ulem,amsthm} 
\newcommand{\ambSp}[1]{\mathcal{F}_{#1}} 
\newcommand{\xx}{\textbf{x}}    
\newcommand{\ww}{\mathbf{w}}    
\newcommand{\dd}{\mathbf{d}}    
\newcommand{\F}{\mathbb{F}}

\newcommand{\C}{\mathbb{C}}     
\renewcommand{\P}{\mathbb{P}}

\newcommand{\qbinom}[2]{\genfrac{[}{]}{0pt}{}{#1}{#2}_q} 

\newtheorem{theorem}{Theorem}[section]
\newtheorem{corollary}{Corollary}[theorem]
\newtheorem{lemma}[theorem]{Lemma}
\newtheorem{definition}{Definition}[section]
\newtheorem{proposition}[theorem]{Proposition}
\newtheorem{example}[theorem]{Example}
\newtheorem{remark}[theorem]{Remark}
\newtheorem{problem}[theorem]{Problem}
\newtheorem{conjecture}[theorem]{Conjecture}

\begin{document}

\let\WriteBookmarks\relax
\def\floatpagepagefraction{1}
\def\textpagefraction{.001}

\shorttitle{Shallow NN expressivity over finite fields}  
\shortauthors{M. Zubkov et~al.}  

\title [mode = title]{Expressivity of Shallow Neural Networks Over Finite Fields}  
                
\tnotemark[1]

\tnotetext[1]{This work was supported by a Canada CIFAR AI Chair award and an NSERC Discovery Grant (DGECR-2020-00338) held by Elina Robeva. CW and SY were additionally supported by a NSERC USRA grant and PM and YC by a WLIURA USRA grant.}
   
\author[1]{Maksym Zubkov}
\cormark[1]
\cortext[cor1]{Corresponding author}
\ead{mzubkov@berkeley.edu}

\author[1]{Carol Wu}
\ead{cwu88@student.ubc.ca}

\author[1]{Shiwei Yang}
\ead{mich207@student.ubc.ca}

\author[1]{Param Mody}
\ead{parammody@gmail.com}

\author[1]{Yifei Chen}
\ead{cyf1231@student.ubc.ca}

\affiliation[1]{organization={Department of Mathematics, University of British Columbia},
            addressline={1984 Mathematics Rd}, 
            city={Vancouver},
            postcode={V6T 1Z2}, 
            state={BC},
            country={Canada}}

\begin{abstract}
    We study the expressivity of shallow polynomial neural networks (PNNs) with monomial activation functions over finite fields. For a given architecture, we define a neuromanifold as the image of the map from all possible network weights into the product of polynomial rings. We quantify the expressivity by the cardinality of the neuromanifold, and derive a natural lower and upper bound. This leads to counting rational points over finite fields, a problem closely linked to the Weil conjectures. Finally, we present an architecture that exhibits a striking difference in the neuromanifolds when considered over a characteristic zero versus a finite‐characteristic field, illustrating the critical role of field characteristic in the notion of expressivity. 
\end{abstract}

\begin{highlights}
\item Characterized shallow neural network expressivity over finite fields via a combinatorial counting framework.
\item Identified key structural differences between finite-field and complex-valued neural networks.
\item Computed exact cardinalities for families of representable function spaces.
\item Established general upper and lower bounds on expressivity for these architectures.
\end{highlights}

\begin{keywords}
polynomial neural network \sep finite field \sep network expressivity \sep neuromanifold \sep tensor decomposition
\end{keywords}

\maketitle

\section{Introduction}
Neural networks have excelled in wide-ranging fields from computer vision to natural language processing in recent years. The capacity of a neural network architecture to represent a class of functions is referred to as its \textit{expressivity}~\citep{GUHRING20}. Neural network dynamics and training are typically studied using statistical and probabilistic methods.
    
    However, algebro-geometric approaches to neural networks gained increased attention in recent years and have been applied to diverse architectures. The direction of \textit{neuroalgebraic geometry}~\citep{MARCHETTI2025} investigates networks through algebraic geometry, studying the properties of the functional space of their outputs, known as the \textit{neuromanifold}~\citep{TRAGER20}. 
    
    In this work, we study neuromanifolds over finite fields. This is motivated by the practical advantages of weight quantization, which reduces both energy consumption and storage requirements~\citep{YUAN23}. Moreover, counting rational points over finite fields connects to the Grothendieck trace formula and Weil conjectures ~\citep{DELIGNE1980}, which allow one to compute cohomology groups over finite fields ~\citep{BRYAN26} and to learn motivic structure of the space over complex numbers ~\citep{BAEZ25}.

    In particular, we consider shallow neural networks over finite fields with monomial activation functions $\sigma(x) = x^r$, where the network output is a $k$-tuple of homogeneous polynomials of degree $r$. We analyze the neural network architecture via the associated \textit{parameter map}, defined as the map from the parameter space to the $k$-fold Cartesian product of a polynomial ring over a finite field.
    
    Since the space of polynomials of bounded degree over a finite field is itself finite, we can quantify the expressivity of a network by computing the cardinality of its neuromanifold. If the neuromanifold coincides with the entire ambient space, we say it is \textit{filling}~\citep{KILEEL19}. The behavior of neuromanifolds over finite fields can differ drastically from their real or complex counterparts, even for small architectures. For instance, for the architecture $\dd = (2,2,2)$ with $r=2$, we find the neuromanifold over finite fields has cardinality approximately half that of the ambient space, whereas over the complex numbers, the neuromanifold is Zariski open.

\subsection{Related Work}
    The geometric structure of neural network function spaces was first investigated in~\citet{AMARI94}, where the term \textit{neuromanifold} was introduced to denote the function space associated with a given architecture. For a comprehensive survey of neural network expressivity, see~\citet{GUHRING20}.

    The expressivity of polynomial neural networks (PNNs) was first analyzed in~\citet{KILEEL19} through the notions of filling and thick architectures. The geometry of neuromanifolds and neurovarieties was subsequently developed in~\citet{KUBJAS24}, while the expressive power of PNNs was further investigated in~\citet{FINKEL25}. Questions of identifiability were addressed in~\citet{USEVICH25}, and a detailed study of singularities was presented in~\citet{SHAHVERDI25-1}. In practice, PNNs have been shown to achieve performance comparable to networks with non-polynomial activation functions~\citep{YAVARTANOO21, CHRYSOS22}. 
    
    To improve computational efficiency and reduce storage and energy consumption, neural networks with low-precision weights have attracted significant attention~\citep{COURBARIAUX14}. Related directions include networks with rational coefficients~\citep{AVERKOV25} and integer coefficients~\citep{WU18}. The expressive capabilities of neural networks with limited-range integer weights were analyzed in~\citet{DRAGHICI02}. Neural networks that operate exclusively with integers have been studied in~\citet{WANG22}.  
    
    Significant attention has been devoted to binary and ternary neural networks, where weights are restricted to $\{0,1\}$ (1-bit) and $\{-1,0,1\}$, respectively. The training of binarized neural networks with 1-bit weights and activations was first introduced in~\citet{HUBARA16}. Subsequently, ternary weight networks, which improve efficiency over binary ones, were proposed in~\citet{LIU23}.

    An extensive survey of neural network quantization was provided in~\citet{GHOLAMI22}. A detailed review of binary neural networks, covering activation and weights, with emphasis on hardware training advantages, was presented in~\citet{YUAN23}.

    Recent developments in large language model (LLM) architectures have shown that weight quantization can be applied with minimal loss in performance, while yielding substantial gains in energy efficiency and storage requirements~\citep{MA24}. A comprehensive study of quantization techniques for LLMs was presented in~\citet{RENREN24}.

It is going to be useful to mention the formulae that we will use across the paper below: 
\begin{itemize}
    \item A $q$-binomial coefficient is equal to
    \begin{equation}
    \label{eq:q-binomial-coefficient}
        \qbinom{a}{b} = \frac{(1-q^a) (1-q^{a-1}) \cdots (1-q^{a-b+1})}{(1-q) (1-q^2) \cdots (1-q^b)}
    \end{equation}
    \item In~\citep{CONRAD1995}, if $r =\sum_{t=0}^{s_r} r_t p^{\,t}$ is the base-$p$ expansion of the activation degree, then 
    \begin{equation}
    \label{eq: upper bound exponent}
        \gamma_{n,p}(r)=\prod_{t=0}^{s_r}\binom{r_t+n-1}{\,r_t}.
    \end{equation}
    \item According to~\citet[Section~1.7]{MORRISON06}, the number of $k\times n$ matrices of rank $m$ over a finite field $\F$ with $q$ elements is
    \begin{equation}
    \label{eq: k x n matrices of rank m}
        N_{q}(n,k,m)=\qbinom{k}{m}(q^n-1)(q^n-q)\dots(q^n-q^{m-1}).
    \end{equation}
    \item According to~\citet[Section~2]{WILLIAMS69} the number of symmetric $n \times n$ matrices of rank $m$ over a finite field $\F$ with $q$ elements is
    \begin{equation}
    \label{eq: n x n symmetric matrices of rank m}
        N_q(n,m) = \prod_{i = 1}^{\lfloor{\frac{m}{2}}\rfloor}\frac{q^{2i}}{q^{2i} - 1} \cdot \prod_{i = 0}^{m - 1} (q^{n - i} - 1).
    \end{equation}
\end{itemize}

\subsection{Major Contributions}
    Our main contributions are summarized as follows:
    \begin{enumerate}
        \item We formulate the expressivity of PNNs over finite fields in terms of the cardinality of their neuromanifold $\mathcal{P}_{\dd,r}$.
        \item We compare shallow networks over finite fields with their complex counterparts, highlighting the key differences between them.
        \item We compute the cardinalities of neuromanifolds for various architectures and establish a general upper and lower bound.
    \end{enumerate}
    A summary of the computed architectures is presented in Table 1 in Section~\ref{subsec:paper-outline}.

\subsection{Paper outline}
\label{subsec:paper-outline}
    In Section~\ref{sec:preliminary}, we introduce the notion of a shallow neural network over finite fields, its ambient space, and establish a general upper bound on the cardinality of neuromanifolds over prime fields. We then present results for several shallow neural network architectures, including:
    \begin{itemize}
        \item Section~\ref{sec:shallow-r=2-p!=2}: architectures $\dd = (n,m,k)$ with square activation $\sigma(x) = x^2$ over a finite field $\F_q$ of characteristic $\neq 2$, and a lower bound on for architectures $\dd = (n,n,k)$ with arbitrary monomial activation $\sigma(x) = x^r$,
        \item Section~\ref{sec:shallow-m=1-m=2}: architectures $\dd = (n,1,k)$ and $\dd = (n,2,k)$ with arbitrary monomial activation $\sigma(x) = x^r$ over a finite field $\F_q$ of arbitrary characteristic,
        \item Section~\ref{sec:shallow-p|r}: architectures $\dd = (n,m,k)$ with activation degree $r$ divisible by the characteristic $p$ of the field $\F_q$.
    \end{itemize}
    Finally, Section~\ref{sec:conclusion} concludes with remarks and a summary of the results.

\begin{center}
\begin{table}[h!]
\label{architecture table}
\caption{Main Results} \vspace{1em}
    \renewcommand{\arraystretch}{1.7}
	\begin{tabular} {c|c|c|c|c} 

    \hline
        $\dd$ & $r$ & $p$ & $\left|\mathcal{P}_{\dd,r}\right|$ & Reference\\
    \hline 
        $(n,m,1)$ & 2 & $p \neq 2$ &
        \parbox[c]{0.45\textwidth}{
            \[
            \min\left(\sum_{j=0}^m N_q(n,j),\,q^{\binom{n+r-1}{r}}\right)
            \]
        }
        
        & \ref{lem: r=2 p!=2 k=1 m<=n} \\ 
        \hline 
        $(n,1,k)$ & any & 
        any & \parbox[c]{0.45\textwidth}{
        \begin{gather*}
        \frac{(q^n-1)(q^k-1)}{(q-1)} + 1
        \end{gather*} } & \ref{lem: n_1_k} \\ \hline 
        $(n,2,k)$ & 
        $\neq p^i$ & 
        any & \parbox[c]{0.45\textwidth}{
        \begin{gather*}
            (q-1)\left|\overline{\mathcal{P}_{(n,2,1), r}}\right| \left|\mathbb{P}^{k-1}\right| \\
            + \binom{\left|\overline{\mathcal{P}_{(n,1,1), r}}\right|}{2}N_{q}(2,k,2) + 1
        \end{gather*} 
        } & \ref{lem: dd_n_2_k}\\ \hline 
        $(n,2,k)$ & $p^i$ & any & 
        \parbox[c]{0.45\textwidth}{
        \begin{gather*} (q-1)\left | \overline{\mathcal{P}_{(n,2,1), r}} \right | \left|\mathbb{P}^{k-1}\right| \\
        + \frac{2}{q(q+1)} \binom{\lvert  \overline{\mathcal{P}_{(n,1,1), r}}\rvert }{2} N_{q}(2,k,2) + 1 \end{gather*} } & \ref{lem: dd_n_2_k}
        \\  \hline 
    $(n,m,k)$ & $ip$ & any & $\left|\mathcal{P}_{\dd,i}\right|$ & \ref{prop:architecture-M_dd_ip_eq_M_dd_i} \\ \hline
    $(n,m,k)$ & $p^i$ & any  & $ \min\left(\sum_{j=1}^m N_q(n,k,j) + 1,\, q^{kn}\right)$ & \ref{prop: r = p^i} 
    \\ \hline
Upperbound:
    \\ \hline
        $(n,m,k)$ & $\text{any}$ & $\text{any}$ & $ |\mathcal{P}_{(n,m,k),r}| \le q^{\,\gamma_{n,p}(r) \cdot k}$ & \ref{lem:global_count_k=1}  
\\ \hline
\end{tabular}
\end{table}
\end{center}

\section{Preliminary}
\label{sec:preliminary}
    Let $\F$ be a field, and fix a triple of natural numbers $\dd=(n,m,k)$. Let $\sigma:\F\to\F$ be the \textit{monomial activation function} with the activation degree $r\in\mathbb{N}$, that is, $\sigma(x)=x^r$. A \textit{shallow neural network} $f_{\ww}:\F^n\to\F^k$ with architecture $\ww=(\dd,\sigma)$ is the composition
    \[
        \F^n\xrightarrow{W_1}\F^m\xrightarrow{\sigma_1}\F^m\xrightarrow{W_2}\F^k,
    \]
    where $W_1\in \F^{m\times n}, W_2\in \F^{k\times m}$ are matrices, and $\sigma_1:\F^m\to\F^m$ is the coordinate-wise application of the activation function $\sigma$. Equivalently,
    \[
        f_{\ww}(\xx)=W_2(\sigma_1(W_1\xx)),\quad \xx=
        \begin{bmatrix}
            x_1 & \dots & x_n
        \end{bmatrix}^T.
    \]
    For the remainder of this paper, we denote the entries of $W_1$ and $W_2$ by $a_{ij}$ and $b_{ij}$, respectively. We denote the $i$-th component of $f_{\ww}$ by $f_{i,\ww}$ and observe that it is a homogeneous polynomial of degree $r$ over $n$ variables of the form
    \begin{equation}
    \label{eq:general-nn-output}
        f_{i,\ww}(\xx)=\sum_{s=1}^{m}b_{is}(a_{s1}x_1+\dots+a_{sn}x_n)^r.
    \end{equation}
    For the rest of the paper, $\F$ indicates any field, $\F_q$ is a finite field, and $\F_p$ is a prime field.

    \noindent
    \textbf{About the ambient space.} Let $\ambSp{}(\F^{n},\F^{k})$ be the space of all functions from $\F^{n}$ to $\F^{k}$. The \textit{expressive power} of a neural network refers to the extent to which the neural network can approximate functions within the ambient space $\mathcal F(\F^{n}, \F^{k})$~\citep{GUHRING20}. However, in this paper we adopt an algebraic notion of expressivity following~\citet{KILEEL19}. In other words, we study the set of functions represented exactly by a given architecture.

    For a finite field $\F_q$, the evaluation map
    \begin{equation} \label{eq: eval_map}
        \begin{aligned}
            eval: (\F_q[x_1,\dots,x_n])^k&\longrightarrow \ambSp{}(\F_q^{n},\F_q^{k})\\
            (f_1,\dots,f_k)&\longmapsto [a\mapsto (f_1(a),\dots,f_k(a))]
        \end{aligned}
    \end{equation}
    is not injective. For example, if $q$ is prime and $n=k=1$, the zero polynomial and the polynomial $x^p-x$ both map to the zero function in $\ambSp{}(\F_p,\F_p)$ as $x^p\equiv x \bmod p$ by Fermat's little theorem.

    A space in which a neural network $f_{\ww}$ resides is known as an \textit{ambient space}. In this work, as a first step, we propose studying the \textit{algebraic} expressivity by considering neural networks with polynomial activation functions as elements of a polynomial ring. In other words, we take the ambient space to be a Cartesian product of polynomial rings. This suggests the following definition.
    \begin{definition}
        Polynomials $g_1,\dots,g_k \in \F_q[x_1,\dots,x_n]$ can be \textit{expressed} by a neural network $f_{\ww}$ with architecture $(\dd,\sigma)$ if there exist weights $\ww\in\F_q^{m(n+k))}$
        such that $f_{i,\ww}(\xx)$ and $g_{i}(\xx)$ are the same element in the polynomial ring $\F_q[x_1,\dots,x_n]$ for all $i$.
    \end{definition}
    \begin{remark}
        Over finite fields, the equality of two elements in $\mathbb{F}_q[x]$ is strictly stronger than equality of them as functions $\mathbb{F}_q^n \to \mathbb{F}_q$; we intentionally adopt the former to obtain an algebraic parametrization.
    \end{remark}
    
    Let $S^r(\F^n)$ be the space of homogeneous polynomials of degree $r$ over $n$ variables with coefficients in $\F$. Observe that $f_{i,\ww}$ is a homogeneous polynomial according to~\Cref{eq:general-nn-output}. Under our assumption that the neural network has a monomial activation function $x^r$ without bias, we can shrink our ambient space for this network architecture to the product of $S^r(\F^n)$.

    \begin{remark}
         For fields $\F$ with $char(\F) \neq 2$, the space of homogeneous polynomials of degree $r$ is naturally isomorphic to the space of symmetric tensors of order $r$.
    \end{remark}

    We define the \textit{parameter map}
    \begin{equation}
        \begin{aligned}
            \Psi_{\dd, r}: \F^{m(n+k)} & \rightarrow (S^r(\F^n))^k \\
            \ww &\mapsto f_{\ww}\coloneq(f_{1,\ww},\dots,f_{k,\ww})
        \end{aligned}
    \end{equation}
    
    \begin{remark}
        We can identify the space $S^r(\F^n)$ with the vector space $\F^{{\binom{n+r-1} {r}}}$ where we take the coefficients of a homogeneous polynomial in the standard monomial basis.
    \end{remark}
    \begin{definition}
        The \textit{neuromanifold}, denoted by $\mathcal{P}_{\dd,r}(\F)$, is the image of the parameter map $\Psi_{\dd,r}$.
        When working over a finite field $\F_q$, if unambiguous, we simply write $\mathcal{P}_{\dd,r}$.
    \end{definition}

    In the setting of finite fields, the set $\mathcal{P}_{\dd,r}$ is finite and therefore already Zariski closed in the ambient affine space. However, according to~\cite{GORTZ10}, we can change the coefficients from a finite field $\F_q$ to its algebraic closure $\overline{\F}_q$. Then the image of the parameter map over $\overline{\F}_q$ is no longer just a finite set of $\F_q$-points, and its Zariski closure will become a genuine algebraic variety. In this paper, we take an arithmetic viewpoint: we study algebraic expressivity via point counts of $\mathcal{P}_{\dd,r}(\F_q)$, and we leave the geometric discussion above to future work.

    A central problem in this context is to quantify how large $\mathcal{P}_{\dd,r}$ is inside the ambient space $(S^r(\F^n))^k$~\citep{KILEEL19}. In our finite-field setting, we measure this via the following notion of arithmetic capacity.
    
    \begin{definition}\label{def: expressive_capacity}
        The \textit{arithmetic expressive capacity} of the neural network with architecture $\dd=(n,m,k)$
        and monomial activation degree $r$ over $\F_q$ is the ratio
        \[
            \delta_{\dd,r}(q)
            \;:=\;
            \frac{|\mathcal{P}_{\dd,r}|}{\bigl|\left(S^r(\F^{\,n}_q)\right)^k\bigr|}.
        \]
    \end{definition}

    \begin{example}
        Let $d = (2,2,1), r = 2,$ and $char(\F_q)\neq2$. Then the output of the neural network is equal to
        \[
            f_{\ww}(\xx) = W_2(\sigma_1(W_1\xx)) = \begin{bmatrix}
                b_1 & b_2
            \end{bmatrix}\sigma_1\left(\begin{bmatrix}
                a_{11} & a_{12}\\
                a_{21} & a_{22}
            \end{bmatrix}\begin{bmatrix}
                x_1\\
                x_2
            \end{bmatrix}\right)
        \]
        A direct computation gives
        \[
            f_{\ww}(\xx) = (b_1 a_{11}^2 + b_2 a_{21}^2)x_1^2 + (2b_1 a_{11} a_{12} + 2b_2 a_{21} a_{22})x_1 x_2 + (b_1 a_{12}^2 + b_2 a_{22}^2)x_2^2
        \]
        Therefore, the parameter map $\Psi_{(2,2,1),2} : \F^6 \rightarrow S^2(\F^2)\cong\F^3$ is given by
        \[
            (a_{11}, a_{12}, a_{21}, a_{22}, b_1, b_2) \mapsto 
            \left(
                b_1 a_{11}^2 + b_2 a_{21}^2,\ 
                2b_1 a_{11} a_{12} + 2b_2 a_{21} a_{22},\ 
                b_1 a_{12}^2 + b_2 a_{22}^2
            \right).
        \]
    \end{example}

\noindent
\textbf{About the upper bounds for neuromanifolds.} To obtain a natural upper bound for neuromanifolds over a finite field $\F_q$ with characteristic $p$, we use the multinomial congruence theorem~\citep{CONRAD1995}. The key observation is that, in the multinomial expansion
\[
    l_i^r=(a_{i1}x_1+\cdots+a_{in}x_n)^r,
\]
some multinomial coefficients $\binom{r}{I}$ (and hence the corresponding coefficients $c_I$ of the monomials $x^I$) vanish modulo $p$. Consequently, in the expansion of
\[
    b_{j1}l_1^r+\cdots+b_{jm}l_m^r,
\]
the coefficients of those monomials $x^I$ are identically zero over $\F_q$ (i.e., they vanish for all choices of parameters). This observation leads to the following result.   
\begin{proposition}
\label{lem:global_count_k=1}
    If $\dd=(n,m,k)$ and $\sigma(x) = x^r$ with $r=\sum_{t=0}^{s_r} r_t p^{\,t}$ over a finite field $\F_q$ with $char(\F_q) = p$ and $\gamma_{n,p}(r)$ is as in~\Cref{eq: upper bound exponent}, the cardinality of the neuromanifold satisfies the following upper bound:
    \[
        |\mathcal P_{(n,m,k),r}(\F_q)|
        \;\le\;
        \left(q^{\,\gamma_{n,p}(r)}\right)^k.
    \]
\end{proposition}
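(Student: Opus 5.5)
Our plan is to show that every element of the neuromanifold lies in a fixed $\F_q$-linear subspace of $(S^r(\F_q^n))^k$ of dimension $k\,\gamma_{n,p}(r)$. Cardinality then bounds the image. Concretely, let $\mathcal I_r=\{I\in\mathbb{N}^n : |I|=r,\ \binom{r}{I}\not\equiv 0 \pmod p\}$, and let $V_r\subseteq S^r(\F_q^n)$ be the span of the monomials $x^I$ with $I\in\mathcal I_r$. We will prove two facts: (i) every component $f_{i,\ww}$ lies in $V_r$; (ii) $|\mathcal I_r|=\gamma_{n,p}(r)$. Together these give $\mathcal P_{\dd,r}\subseteq V_r^k$, hence $|\mathcal P_{\dd,r}|\le |V_r|^k=(q^{\gamma_{n,p}(r)})^k$.

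For (i) we expand each term of~\Cref{eq:general-nn-output} by the multinomial theorem:
\[
(a_{s1}x_1+\dots+a_{sn}x_n)^r=\sum_{|I|=r}\binom{r}{I}a_s^I x^I .
\]
The integer coefficient $\binom{r}{I}$ is read in $\F_q$ through the prime subfield $\F_p$, so it vanishes whenever $p\mid\binom{r}{I}$. Thus each $l_s^r$ lies in $V_r$. Since $V_r$ is a linear subspace, the linear combination $\sum_s b_{is}l_s^r$ also lies in $V_r$, and this holds for every $i$ and every choice of weights.

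Step (ii) is the main point and uses the multinomial Lucas theorem from~\citep{CONRAD1995}. Write $I=(i_1,\dots,i_n)$ and expand each $i_j=\sum_t i_{j,t}p^t$ in base $p$. The theorem states that $\binom{r}{I}\not\equiv0\pmod p$ exactly when $\sum_j i_{j,t}=r_t$ for every $t$, that is, when adding the $i_j$ in base $p$ produces no carries. In that case $\binom{r}{I}\equiv\prod_t\binom{r_t}{i_{1,t},\dots,i_{n,t}}\not\equiv0$.

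If we prefer a self-contained argument, we can use the Frobenius identity $(x_1+\dots+x_n)^{p^t}=x_1^{p^t}+\dots+x_n^{p^t}$ in characteristic $p$. This gives
\[
(x_1+\dots+x_n)^r=\prod_t\Big(\sum_j x_j^{p^t}\Big)^{r_t}.
\]
Expanding each factor produces exponents of the form $\sum_t c_{\cdot,t}p^t$ with digits $c_{j,t}\le r_t<p$. By uniqueness of base-$p$ expansion, distinct choices of digit tuples give distinct monomials, so no two terms combine. Each resulting coefficient is a product of ordinary multinomials $\binom{r_t}{c_{\cdot,t}}$ with $r_t<p$, and these are nonzero mod $p$.

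Either way, $\mathcal I_r$ is in bijection with tuples $(c_{\cdot,t})_t$ where, for each $t$, $c_{\cdot,t}$ is a composition of $r_t$ into $n$ nonnegative parts. The number of such compositions for a given $t$ is $\binom{r_t+n-1}{r_t}$. Taking the product over $t$ gives exactly $\gamma_{n,p}(r)$ as defined in~\Cref{eq: upper bound exponent}. We expect the only delicate step to be this no-carry/uniqueness argument; the rest is bookkeeping.
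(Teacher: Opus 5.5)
Your proof is correct and matches the paper's argument: you use the multinomial Lucas congruence to show that $\binom{r}{I}\not\equiv 0 \pmod p$ exactly when, at each position $t$, the base-$p$ digits of $I$ form a composition of $r_t$ into $n$ parts; stars and bars then gives $\gamma_{n,p}(r)$ surviving monomials, and you bound by $q$ choices for each surviving coefficient in each of the $k$ outputs. Phrasing this as the containment $\mathcal{P}_{\dd,r}\subseteq V_r^k$, with the optional Frobenius derivation of the digit count, is a slightly cleaner form of the same proof, and it shows that the bound needs no independence of the $k$ outputs, contrary to what the paper's wording suggests.
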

\begin{proof}
    Consider the base-$p$ expansion of $r$ as $\sum_{t=0}^{s_r} r_t\,p^{\,t}$ with digits $0\le r_t\le p-1$. For each exponent $n$-tuple $\alpha=(\alpha_1,\dots ,\alpha_n) \in \mathbb{Z}_{\geq 0}^n$ with $\sum_{i=1}^n \alpha_i=r$, let $\alpha_{i,t}\in\{0,\dots ,p-1\}$ be its
    $t$-th base-$p$ digit so that $\alpha_i=\sum_t\alpha_{i,t} p^{\,t}$. 
    
    By Theorem \ref{thm:lucas_multinomial}, the multinomial coefficient $\binom{r}{\alpha_1,\dots ,\alpha_n}$ is non-zero
    mod $p$ and thus non-zero in $\F_q$ exactly when every one-digit factor
    $\binom{r_t}{\alpha_{1,t},\dots ,\alpha_{n,t}}$ is non-zero.
    Since each $r_t<p$, none of the factorials occurring in that
    one-digit factor is divisible by $p$, so the only condition is the sum constraint $\alpha_{1,t}+\cdots+\alpha_{n,t}=r_t$.

    For fixed $t$, the number of $(\alpha_{1,t},\dots,\alpha_{n,t})$ satisfying that constraint equals the stars-and-bars count $\binom{r_t+n-1}{\,r_t}$. Digit positions are independent, hence the total number of exponent tuples that give non-vanishing coefficients is the product of those counts:
    \[
        \gamma_{n,p}(r)=
        \prod_{t=0}^{s_r}
        \binom{n+r_t-1}{\,r_t},
    \]
    as claimed. 
    
    Naturally, this means that $\P_{(n,m,1), r} \le$ $q^{\gamma_{n,p}(r)}$, since there are $q$ choices for the coefficient of each non-zero term in the multi-nomial expansion. It is not difficult to extend to $k>1$. Consider the $k-$tuple of outputs $(u_1,u_2, \dots, u_k)$: each $u_i$ has at most $q^{\gamma_{n,p}(r)}$ choices. Since the outputs are independent, we get $\P_{(n,m,k), r} \le q^{\gamma_{n,p}(r) \cdot k}$.
\end{proof}

\section{Shallow networks with $r=2$ and $char(\F_q)\neq 2$}
\label{sec:shallow-r=2-p!=2}
    The $i$-th output $f_{i,\ww}$ of the neural network corresponds to a symmetric tensor of order $r$, which we will denote by $A_i$. Then, the output of the neural network $f_{\ww}$ can be identified with a $k$-tuple $(A_1, ..., A_k)^T$ of symmetric tensors, each satisfying the symmetric CP-decomposition
    \begin{equation}
    \label{eq: nmk architecture any r}
        A_i := b_{i1} L_1^{\otimes r} + \cdots + b_{im} L_m^{\otimes r}        
    \end{equation}
    where each $L_j \in \mathbb{F}^n$ is $j$-th row of $W_1$~\citep{LANDSBERG10}.

    However, over finite fields, not every symmetric tensor has a well-defined CP-rank~\citep[Proposition~7.1]{FRIEDLAND13}. Therefore, there exist shallow architectures whose ambient space contains elements that cannot be expressed via the decomposition in~\Cref{eq: nmk architecture any r}. 

    Fortunately, for the square activation ($r=2$) with $char(\F)\neq 2$, every symmetric matrix admits a symmetric rank-one decomposition of the form in~\Cref{eq: nmk architecture any r}. This allows us to compute the cardinality of the neuromanifold for architectures $\dd=(n,m,1),$ and $r=2$ below.

\subsection{Single Output Architectures $\dd=(n,m,1)$ with $r=2,$ $char(\F)\neq 2$}
    For this architecture, the output of the neural network $f_{\ww}$ according to~\Cref{eq: nmk architecture any r} is represented by a single $n \times n$ symmetric matrix of the form
    \begin{equation}
    \label{eq: simultaneous diagonalization formulation}
        A = b_1\, L_1^\top L_1 + \cdots + b_m\, L_m^\top L_m = W_1^TD(W_{2,1})W_1,
    \end{equation}

    where $W_{2,i}$ is the $i$-th row of $W_2$ and $D(\mathbf{v})$ is the diagonal matrix formed by placing the entries of a vector $\mathbf{v}$ on the diagonal. The theorem below (a reformulation of~\citet[Theorem~9.13]{AXLER15}) shows that every symmetric matrix over any field of characteristic $\neq 2$ has a well-defined CP-rank.
    \begin{theorem} [Corollary of {\citet[Theorem~9.13]{AXLER15}}] \label{thm: sym matrices diagonalizable by congruence}
    For every symmetric matrix $C$ over a field $\F$ of characteristic $\neq 2$, there exists a diagonal matrix $D$ and an invertible matrix $Q$ such that $C = Q D Q^\top$.
    \end{theorem}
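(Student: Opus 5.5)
The plan is the classical symmetric Gaussian elimination argument, carried out by induction on the size $n$ of the symmetric matrix $C\in\F^{n\times n}$. We view $C$ as the Gram matrix of the symmetric bilinear form $\beta(u,v)=u^\top C v$ on $\F^n$. The statement $C=QDQ^\top$ with $Q$ invertible and $D$ diagonal is then equivalent to the existence of a basis of $\F^n$ that is orthogonal for $\beta$. Such a basis is exactly what \citet[Theorem~9.13]{AXLER15} provides. So one route is simply to translate that theorem into matrix language: if $P$ is the change-of-basis matrix whose columns form a $\beta$-orthogonal basis, then $P^\top C P = D$ is diagonal, and we set $Q=(P^\top)^{-1}=(P^{-1})^\top$.

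To make the argument self-contained, I would prove the existence of an orthogonal basis directly. If $C=0$, take $Q=I$ and $D=0$. Otherwise, the key step is to find a vector $v$ with $\beta(v,v)\neq 0$.
\begin{itemize}
\item If some diagonal entry $c_{ii}\neq 0$, take $v=e_i$.
\item Otherwise some off-diagonal entry $c_{ij}\neq 0$. Then
\[
\beta(e_i+e_j,e_i+e_j)=c_{ii}+2c_{ij}+c_{jj}=2c_{ij},
\]
which is nonzero precisely because $\operatorname{char}\F\neq 2$.
\end{itemize}
This is the only place the characteristic hypothesis enters. It is also the main obstacle: in characteristic $2$, alternating forms such as $\begin{bmatrix}0&1\\1&0\end{bmatrix}$ give a genuine counterexample.

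Given such a $v$, the map $u\mapsto\beta(v,u)$ is a nonzero linear functional. Its kernel $v^{\perp}$ therefore has dimension $n-1$, and $v\notin v^\perp$, so $\F^n=\F v\oplus v^\perp$. Restricting $\beta$ to $v^\perp$ gives a symmetric bilinear form on an $(n-1)$-dimensional space. By the induction hypothesis, in the form of the existence of an orthogonal basis, $v^\perp$ has a $\beta$-orthogonal basis $v_2,\dots,v_n$. Then $v,v_2,\dots,v_n$ is a $\beta$-orthogonal basis of $\F^n$.

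Let $P$ be the matrix with these vectors as columns. Then $P^\top C P=D=\operatorname{diag}(\beta(v,v),\beta(v_2,v_2),\dots)$, and setting $Q=(P^{-1})^\top$ gives $C=QDQ^\top$ with $Q$ invertible. This yields exactly the form $W_1^\top D(W_{2,1})W_1$ in \Cref{eq: simultaneous diagonalization formulation}, with $W_1=Q^\top$.
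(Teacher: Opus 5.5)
Your proposal is correct and follows the same route as the paper, which invokes the diagonalizability of symmetric bilinear forms in characteristic $\neq 2$ (Axler's Theorem~9.13) and reads an orthogonal basis as the factorization $C = QDQ^\top$. Your induction, where $2 \neq 0$ is used only to produce a vector $v$ with $\beta(v,v) \neq 0$, is the standard proof of that cited result and makes explicit that the statement holds over any field of characteristic $\neq 2$.
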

    This is a direct consequence of the classical linear algebra result that every symmetric bilinear form over a field of characteristic $\neq 2$ can be diagonalized. 
    
    According to~\citet[Lemma~3.3]{KUBJAS24}, the neuromanifold
    \begin{equation}
    \label{eq: neuromanifold description}
        \text{$\mathcal{P}_{(n,m,1),r = 2}(\F)$ is equivalent to the set of symmetric $n\times n$ matrices of rank $\leq m$ }
    \end{equation}
    
    Theorem~\ref{thm: sym matrices diagonalizable by congruence} tells us that the above description can be extended to any field of characteristic $p \neq 2$. Therefore, computing the cardinality of the neuromanifold $\mathcal{P}_{\dd, r}$ for $\dd = (n,m,1)$, $r = 2$ over $\F_q$ is equivalent to counting the number of $n \times n$ symmetric matrices of rank $\leq m$ over $\F_q$.
    
    If $m \geq n$, then clearly $\mathcal{P}_{\dd,r}$ is the entire space of symmetric matrices $S^r(\mathbb{F}^n_q)$:
    \begin{lemma} \label{lem: r=2 p!=2 k=1 m>=n}
        If $\dd = (n,m,1)$, $r = 2$, $p \neq 2$, then
        \begin{align*}
            \mathcal{P}_{\dd, r} = S^r(\mathbb{F}^n_q) \iff m \geq n
        \end{align*}
    \end{lemma}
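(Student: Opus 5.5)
The plan is to use the identification in \Cref{eq: neuromanifold description}: for $r=2$ and $p\neq 2$, $\mathcal{P}_{(n,m,1),2}$ is exactly the set of symmetric $n\times n$ matrices over $\F_q$ of rank at most $m$. Here we identify $S^2(\F_q^n)$ with the space of all symmetric $n\times n$ matrices, which is legitimate since $2$ is invertible in $\F_q$. With this in hand, the statement becomes a purely linear-algebraic comparison: the set of symmetric matrices of rank $\le m$ is all of $S^2(\F_q^n)$ if and only if $m\ge n$. We prove the two directions separately.

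For the direction ($\Leftarrow$), take an arbitrary symmetric $C$ and apply \Cref{thm: sym matrices diagonalizable by congruence} to write $C=QDQ^\top$ with $D=\mathrm{diag}(d_1,\dots,d_n)$. We then build explicit weights.
\begin{itemize}
\item Let the first $n$ rows of $W_1$ be the columns of $Q$ (transposed as rows), so that $W_1^\top$ restricted to these rows equals $Q$.
\item Set the remaining $m-n$ rows of $W_1$ to zero.
\item Let $W_2=(d_1,\dots,d_n,0,\dots,0)$.
\end{itemize}
Then \Cref{eq: simultaneous diagonalization formulation} gives $W_1^\top D(W_2)W_1=QDQ^\top=C$, so every symmetric matrix is expressed. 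Surjectivity does not even need the rank description; it only needs the diagonalization theorem and the padding with zero neurons.

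For the direction ($\Rightarrow$), argue by contraposition. Suppose $m<n$. Every output has the form $W_1^\top D(W_2)W_1$, a product through an $m$-dimensional space, so its rank is at most $m<n$. The identity matrix $I_n$ is symmetric of rank $n$, hence lies in $S^2(\F_q^n)$ but not in $\mathcal{P}_{\dd,2}$. Concretely, $I_n$ corresponds to the polynomial $x_1^2+\cdots+x_n^2$, and we need $p\neq2$ to ensure the matrix-polynomial correspondence sends it to a rank-$n$ form.

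The main (mild) obstacle is keeping the identification between homogeneous quadrics and symmetric matrices honest over $\F_q$. The off-diagonal coefficient of $x_ix_j$ is $2A_{ij}$, so this correspondence is a bijection exactly because $2$ is invertible. Once that is noted, both directions are immediate from the rank bound and \Cref{thm: sym matrices diagonalizable by congruence}.
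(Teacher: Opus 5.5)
Your proof is correct and is essentially the paper's argument. The paper identifies $\mathcal{P}_{(n,m,1),2}$ with the symmetric matrices of rank at most $m$ (via \Cref{thm: sym matrices diagonalizable by congruence}) and calls the equivalence clear; your padded construction $W_1=\begin{bmatrix} Q^\top \\ 0\end{bmatrix}$, $W_2=(d_1,\dots,d_n,0,\dots,0)$ for $m\ge n$, together with the rank bound applied to $I_n$ for $m<n$, spells out both directions of that remark (one small wording fix: it is the first $n$ columns of $W_1^\top$, not its rows, that equal $Q$).
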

    For the cases where $m \leq n$, we refer to a paper by~\citet{WILLIAMS69} which counts the number of $n \times n$ symmetric matrices of rank $m$ over $\F_q$.
    \begin{theorem} [{~\citet[Theorem~2]{WILLIAMS69}}] \label{thm: number of symmetric n x n matrices of rank m}
        Let $N_q(n, m)$ denote the number of $n \times n$ symmetric matrices of rank $m$ over a field $\F_q$ of characteristic not equal to 2. Then
        \[
            N_q(n,m) = \prod_{i = 1}^{\lfloor{\frac{m}{2}}\rfloor}\frac{q^{2i}}{q^{2i} - 1} \cdot \prod_{i = 0}^{m - 1} (q^{n - i} - 1).
        \]
    \end{theorem}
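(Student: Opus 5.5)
We plan to count orbits of the general linear group and apply orbit–stabilizer: $N_q(n,m)=\sum_{[B]}|GL_n(\F_q)|/|\mathrm{Stab}(C)|$, where the sum runs over the congruence classes of rank-$m$ symmetric matrices. First, we reduce to the nondegenerate case. A symmetric $n\times n$ matrix $C$ of rank $m$ is determined by its radical $R=\ker C$, a subspace of dimension $n-m$, together with a nondegenerate symmetric bilinear form on $\F_q^n/R\cong\F_q^m$. Hence
\[
N_q(n,m)=\qbinom{n}{m}\, N_q(m,m).
\]
So it suffices to count nondegenerate symmetric $m\times m$ matrices $N_q(m,m)$, and then check that $\qbinom{n}{m}\cdot N_q(m,m)$ agrees with the stated product. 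That check is a routine telescoping: $\qbinom{n}{m}=\prod_{i=0}^{m-1}(q^{n-i}-1)/\prod_{i=0}^{m-1}(q^{m-i}-1)$.

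Next, we count nondegenerate forms. By Theorem~\ref{thm: sym matrices diagonalizable by congruence}, every such form is congruent to a diagonal one. The standard classification over $\F_q$ with $q$ odd says there are exactly two congruence classes in each dimension $m\ge 1$, distinguished by the discriminant in $\F_q^\times/(\F_q^\times)^2$. This follows because every binary nondegenerate form is universal, which comes from counting the squares in $\F_q$. The orbit of $C$ under $X\mapsto Q^\top C Q$ has size $|GL_m(\F_q)|/|O(C)|$. Therefore $N_q(m,m)=|GL_m|\bigl(|O^{(1)}_m|^{-1}+|O^{(2)}_m|^{-1}\bigr)$, where the denominators are the two orthogonal group orders.

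The main obstacle is computing the orthogonal group orders. These are classical:
\begin{itemize}
\item for $m=2\ell+1$, $|O_m|=2q^{\ell^2}\prod_{i=1}^{\ell}(q^{2i}-1)$, the same for both classes;
\item for $m=2\ell$, $|O^{\pm}_{m}|=2q^{\ell(\ell-1)}(q^\ell\mp1)\prod_{i=1}^{\ell-1}(q^{2i}-1)$.
\end{itemize}
To avoid citing these, we would alternatively use a direct recursion. Count nondegenerate symmetric $m\times m$ matrices by the first row and column, splitting on whether $c_{11}\neq 0$. If $c_{11}\neq0$, complete the square to get $(q-1)q^{m-1}N_q(m-1,m-1)$. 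If $c_{11}=0$, the first column is nonzero, and a hyperbolic-plane splitting gives $(q^{m-1}-1)q^{m-1}\cdot q^{m-2}N_q(m-2,m-2)$. We would then verify by induction that the closed form $\prod_{i=1}^{\lfloor m/2\rfloor}\frac{q^{2i}}{q^{2i}-1}\prod_{i=0}^{m-1}(q^{m-i}-1)$ satisfies this recursion with $N_q(0,0)=1$ and $N_q(1,1)=q-1$. This inductive check, handled separately for $m$ even and $m$ odd, is the algebra we expect to be fiddliest.

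Finally, combining the two steps with the $q$-binomial factor yields the formula of Theorem~\ref{thm: number of symmetric n x n matrices of rank m}.
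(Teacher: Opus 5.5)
\textbf{Main route: correct.} The paper gives no proof of this statement; it simply quotes Williams (1969, Theorem~2), so your proposal is an independent derivation. The radical reduction $N_q(n,m)=\qbinom{n}{m}N_q(m,m)$ is a correct bijection: a kernel of dimension $n-m$ together with a nondegenerate form on the quotient. With the classical orthogonal group orders you quote, orbit--stabilizer gives
$N_q(2\ell+1,2\ell+1)=2|GL_{2\ell+1}(\F_q)|/|O_{2\ell+1}|=q^{\ell(\ell+1)}\prod_{i=0}^{\ell}(q^{2i+1}-1)$.
In the even case, using $\frac{1}{q^\ell-1}+\frac{1}{q^\ell+1}=\frac{2q^\ell}{q^{2\ell}-1}$, it gives $N_q(2\ell,2\ell)=q^{\ell(\ell+1)}\prod_{i=0}^{\ell-1}(q^{2i+1}-1)$. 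Both agree with $\prod_{i=1}^{\lfloor m/2\rfloor}\frac{q^{2i}}{q^{2i}-1}\prod_{j=1}^{m}(q^j-1)$. You leave this final computation implicit, and you should write it out. The cost of this route is that it imports the orthogonal group orders, which are themselves usually proved by a count of exactly this kind.

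\textbf{Fallback recursion: wrong as written.} Your $c_{11}\neq 0$ term $(q-1)q^{m-1}N_q(m-1,m-1)$ is fine. The $c_{11}=0$ term is not. Apply the congruence by $\mathrm{diag}(1,P)$ to move the nonzero column $v$ to the first standard basis vector of $\F_q^{m-1}$. The remaining data are $d_{11}\in\F_q$, $w\in\F_q^{m-2}$, and a symmetric $(m-2)\times(m-2)$ block $E$. The form restricted to the orthogonal complement of the hyperbolic plane spanned by the first two basis vectors of $\F_q^m$ is exactly $E$. Hence each $v\neq 0$ has $q\cdot q^{m-2}\cdot N_q(m-2,m-2)=q^{m-1}N_q(m-2,m-2)$ completions. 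The term is therefore $(q^{m-1}-1)q^{m-1}N_q(m-2,m-2)$, with no extra factor $q^{m-2}$.

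With your extra factor the recursion already fails at $m=3$. For $q=3$ it gives $324+432=756$, which exceeds the $3^6=729$ symmetric $3\times 3$ matrices over $\F_3$, so the inductive check you planned cannot succeed.

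The corrected recursion, with $a_m=N_q(m,m)$, is $a_m=(q-1)q^{m-1}a_{m-1}+(q^{m-1}-1)q^{m-1}a_{m-2}$. The induction is then short: the odd step $m=2\ell+1$ reduces to $(q-1)q^{2\ell}+q^{2\ell}-1=q^{2\ell+1}-1$, and the even step reduces to $(q-1)+1=q$. This corrected version is worth having for its own sake. Neither it nor the radical reduction uses $\operatorname{char}\F_q\neq 2$, whereas the orthogonal-group route does.
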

        As an immediate corollary,
    \begin{corollary} \label{lem: r=2 p!=2 k=1 m<=n}
        For $m \leq n$,
        \begin{align*}
            |\mathcal{P}_{(n,m,1), 2}(\F_q)| &= \sum_{j = 0}^{m} N_q(n,j)
        \end{align*}
    \end{corollary}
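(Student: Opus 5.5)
The plan is to identify $\mathcal{P}_{(n,m,1),2}(\F_q)$ with the set of symmetric $n\times n$ matrices over $\F_q$ of rank at most $m$. Once this is established, the count follows by partitioning that set according to exact rank $j\in\{0,1,\dots,m\}$ and applying Theorem~\ref{thm: number of symmetric n x n matrices of rank m} to each piece. Here $N_q(n,0)=1$ corresponds to the zero matrix, since the empty products give $1$. The identification uses $p\neq 2$: the map $f\mapsto A$ that sends a quadratic form to its symmetric Gram matrix ($f(\xx)=\xx^\top A\xx$) is then a bijection $S^2(\F_q^n)\to\mathrm{Sym}_n(\F_q)$. We therefore work entirely with the matrix $A=W_1^\top D(W_{2,1})W_1$ from~\Cref{eq: simultaneous diagonalization formulation}.

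For the inclusion of $\mathcal{P}$ into the rank-$\le m$ symmetric matrices, note that $A=W_1^\top D W_1$ factors through $\F_q^m$. Hence $\operatorname{rank}A\le \operatorname{rank}D\le m$, and $A$ is clearly symmetric.

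For the reverse inclusion, let $C$ be symmetric of rank $j\le m$. By Theorem~\ref{thm: sym matrices diagonalizable by congruence}, write $C=QDQ^\top$ with $Q$ invertible and $D$ diagonal. Because $Q$ is invertible, $D$ also has rank $j$, so it has exactly $j$ nonzero diagonal entries. After permuting, which amounts to conjugating by a permutation matrix and absorbing it into $Q$, we may assume these are $d_1,\dots,d_j$. Then
\[
C=\sum_{s=1}^{j} d_s\, q_s q_s^\top ,
\]
where $q_s$ is the $s$-th column of $Q$. We define the weights as follows: the rows $L_s$ of $W_1$ are $q_s^\top$ for $s\le j$ and $0$ for $j<s\le m$; the output weights are $b_s=d_s$ for $s\le j$ and $0$ otherwise. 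This realises $C$, which shows that $C$ lies in the image of $\Psi_{(n,m,1),2}$. This step needs $m\ge j$, and it is valid for all $m$, including $m\le n$.

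Combining the two inclusions, $|\mathcal{P}_{(n,m,1),2}|=\sum_{j=0}^m N_q(n,j)$ for $m\le n$. The only delicate point is the bookkeeping between quadratic forms and symmetric matrices, which is where $p\neq 2$ is essential. The rest is a direct partition count.
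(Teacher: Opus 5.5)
Your proposal is correct and follows essentially the same route as the paper: identify $\mathcal{P}_{(n,m,1),2}(\F_q)$ with the set of symmetric $n\times n$ matrices of rank at most $m$, then sum Williams' counts $N_q(n,j)$ over $j=0,\dots,m$, with $N_q(n,0)=1$ accounting for the zero matrix. The only difference is that the paper gets the identification by citing \citet[Lemma~3.3]{KUBJAS24} together with Theorem~\ref{thm: sym matrices diagonalizable by congruence}, whereas you prove both inclusions directly, which makes the argument more self-contained.
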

    Next, we focus on architectures with output width $k=2$.
    
\subsection{Shallow Architectures $\dd=(n,n,2)$, $char(\F_q)\neq 2$}
    We now turn to the case of a network with multiple outputs. Here, the structure of the neuromanifold becomes substantially more subtle and is closely related to the problem of simultaneous diagonalization via congruence of symmetric tensors. We focus on the square architecture $\dd=(n,n,2)$ and show that, while the corresponding neuromanifold is Zariski dense over $\mathbb{C}$, its finite-field analogue occupies only a proper fraction of the ambient space.

    \noindent
    \textbf{The case $r=2$.} According to~\Cref{eq: simultaneous diagonalization formulation}, a point $(A_1, A_2)$ belongs to the neuromanifold $\mathcal{P}_{(n,n,2), 2}$ if and only if 
    \begin{equation}
    \label{eq:nn2-r2-decomposition-formula}
        A_i = b_{i1}\, L_1^\top L_1 + \cdots + b_{im}\, L_m^\top L_m = W_1^TD(W_{2,i})W_1
    \end{equation}
    for $i=1,2$. We are going to follow the definition of simultaneous diagonalization of symmetric matrices given in~\citet{BUSTAMANTE20}. 
    \begin{definition}
        A set of $k$ symmetric matrices $A_1,\dots,A_k$ over $\F$ are \textit{simultaneously diagonalizable via congruence} (SDC) if there exists an invertible matrix $Q \in GL_n(\F)$ such that
        \[
            A_i = QD_iQ^T, \quad \text{$D_i$ is diagonal for all $i = 1, ..., k$.}
        \]
    \end{definition}
    It follows from the above definition that if $A_1, A_2$ are simultaneously diagonalizable, then they lie in $\mathcal{P}_{(n,n,2), 2}$. The opposite direction unfortunately does not hold as the matrix $W_1$ may not be invertible. In other words, we can find a pair of symmetric matrices $(A_1,A_2)$ that satisfy decomposition~\Cref{eq:nn2-r2-decomposition-formula}, but are not simultaneously diagonalizable.
    \begin{example}
        If $\dd=(3,3,2)$, then the network output for the forms $L_1(\xx)=x_1, L_2(\xx)=x_2,$ and $L_3(\xx)=x_1+x_2$ equals to
        \[
        \begin{aligned}
            & f_{1,\ww}(x_1,x_2,x_3) = b_{11}x_1^2 + b_{12}x_2^2 + b_{13}(x_1+x_2)^2\\
            & f_{2,\ww}(x_1,x_2,x_3) = b_{21}x_1^2 + b_{22}x_2^2 + b_{23}(x_1+x_2)^2
        \end{aligned}
        \]
        Taking $b_{11}=b_{12}=-1, b_{13}=1$ and $b_{21} = b_{23} =0, b_{22}=1$, we obtain $f_{1,\ww}(\xx) = 2x_1x_2$ and $f_{2,\ww}(\xx)=x_2^2$, which are not simultaneously diagonalizable according to Corollary~\ref{thm: zariski closure (n_n_2) over C filling}. 
    \end{example}
    
    By applying an algorithm for determining SDC given by ~\citet{BUSTAMANTE20}, we can partially extend the results on filling architectures for shallow networks over $\mathbb{C}$ originally given in the paper~\cite{KUBJAS24}. This yields the following.
    
    \begin{theorem} \label{thm: zariski closure (n_n_2) over C filling}
        If $\dd=(n,n,2), r=2,$ and $\F=\mathbb{C}$, then the Zariski closure of $\mathcal{P}_{\dd,r}(\mathbb{C})$ is the entire ambient space $S^2(\mathbb{C}^n)\times S^2(\mathbb{C}^n)$.
    \end{theorem}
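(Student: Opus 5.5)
The plan is to exhibit a nonempty Zariski open subset of $S^2(\mathbb{C}^n)\times S^2(\mathbb{C}^n)$ contained in $\mathcal{P}_{(n,n,2),2}(\mathbb{C})$. Since the ambient space is irreducible, any nonempty Zariski open subset is dense, and this will give the claim.

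The natural candidate is
\[
U=\{(A_1,A_2) : \det A_1\neq 0,\ \text{the discriminant of } \det(tI - A_1^{-1}A_2) \text{ is nonzero}\}.
\]
This is the set of pairs with $A_1$ invertible and $A_1^{-1}A_2$ having $n$ distinct eigenvalues. It is cut out by the nonvanishing of the polynomial $\det(A_1)^{N}\cdot \mathrm{disc}$, where $N$ is chosen to clear denominators. Hence $U$ is Zariski open. It is nonempty: take $A_1=I$ and $A_2=\mathrm{diag}(1,2,\dots,n)$.

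Next I show $U\subseteq \mathcal{P}_{(n,n,2),2}(\mathbb{C})$. This is the classical SDC criterion, which is the base case of the algorithm of \citet{BUSTAMANTE20}. Let $M=A_1^{-1}A_2$ and choose eigenvectors $v_i$ with $Mv_i=\lambda_i v_i$ and the $\lambda_i$ distinct. Then $A_2v_i=\lambda_i A_1 v_i$. From the symmetry of $A_1$ and $A_2$,
\[
\lambda_i v_j^TA_1v_i = v_j^TA_2v_i = \lambda_j v_j^TA_1 v_i .
\]
So $v_j^TA_1v_i=0$ for $i\neq j$, and likewise $v_j^TA_2 v_i=0$. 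Let $P$ be the matrix with columns $v_i$; it is invertible because the eigenvalues are distinct. Then $P^TA_1P=D_1$ and $P^TA_2P=D_2$ are both diagonal, so $A_i=Q D_i Q^T$ with $Q=P^{-T}$.

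Setting $W_1=Q^T$ and letting the rows of $W_2$ be the diagonals of $D_1$ and $D_2$, \Cref{eq:nn2-r2-decomposition-formula} realizes $(A_1,A_2)$ with $m=n$ hidden neurons. Thus $U\subseteq\mathcal{P}_{\dd,2}(\mathbb{C})$, and $\overline{\mathcal{P}_{\dd,2}(\mathbb{C})}\supseteq\overline{U}=S^2(\mathbb{C}^n)^2$.

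The main care point is verifying that the discriminant condition defines a genuinely nonempty open set. This is handled by the explicit witness above, together with the observation that the discriminant is a polynomial in the entries of $A_1^{-1}A_2$, and hence a polynomial in the entries of $A_1,A_2$ once multiplied by a power of $\det A_1$.
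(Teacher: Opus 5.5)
Your proof is correct and takes essentially the paper's route: the pairs with $A_1$ invertible and $A_1^{-1}A_2$ having $n$ distinct eigenvalues form a nonempty Zariski open set of SDC pairs, which lies in $\mathcal{P}_{\dd,2}(\mathbb{C})$, so the neuromanifold is Zariski dense. Compared with the paper, you prove the needed direction of the SDC criterion directly (via $A_1$-orthogonality of eigenvectors) instead of citing \citet{BUSTAMANTE20}, you exhibit an explicit point of $U$, and you use the correct exceptional locus $\{\det A_1=0\}$ where the paper's $V_2$ is misstated. The only nit is that, unless $N$ is taken larger than strictly needed to clear denominators, you should write $U=\{\det A_1\neq 0\}\cap\{\det(A_1)^N\mathrm{disc}\neq 0\}$, which is still open.
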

    To prove the above, we will apply the following corollary of~\citet[Theorem~7]{BUSTAMANTE20}.
        \begin{corollary} \label{lem: 2-tuple sdc}
            Let $A_1, A_2$ be $n \times n$ symmetric matrices, where $A_1$ is full-rank. Then, $A_1, A_2$ are simultaneously diagonalizable via congruence if and only if there exists a diagonal matrix $D$ and an invertible $n \times n$ matrix $Q$ such that
            $$A_1^{-1}A_2 = QDQ^{-1}.$$
        \end{corollary}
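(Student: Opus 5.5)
The plan is to reduce the statement to the classical criterion for simultaneous diagonalization of a pencil in which one form is nondegenerate. I will work with the bilinear forms $B_1(x,y)=x^\top A_1 y$ and $B_2(x,y)=x^\top A_2 y$. Since $A_1$ is invertible, $M:=A_1^{-1}A_2$ is self-adjoint with respect to $B_1$:
\[
B_1(x,My)=x^\top A_2 y = B_2(x,y) = (Mx)^\top A_1 y = B_1(Mx,y).
\]
The last equality uses the symmetry of $A_2$, since $A_2 = A_1 M = M^\top A_1$. I will prove both directions of the equivalence directly, rather than quoting \citet[Theorem~7]{BUSTAMANTE20} verbatim. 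Throughout, $\F$ is taken to have characteristic $\neq 2$ (over $\mathbb{C}$ in the application).

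\emph{($\Rightarrow$)} Suppose $A_i=QD_iQ^\top$ with $Q$ invertible. Then $D_1$ is invertible because $A_1$ is, and
\[
A_1^{-1}A_2=(Q^\top)^{-1}D_1^{-1}Q^{-1}QD_2Q^\top=(Q^\top)^{-1}(D_1^{-1}D_2)(Q^\top)^{-1\,-1}.
\]
So $A_1^{-1}A_2$ is similar to the diagonal matrix $D_1^{-1}D_2$ via $(Q^\top)^{-1}$, which is exactly the required form.

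\emph{($\Leftarrow$)} Suppose $M=PDP^{-1}$ with $D$ diagonal. Then $\F^n$ decomposes as $V_{\lambda_1}\oplus\cdots\oplus V_{\lambda_s}$, the eigenspaces of $M$ for distinct eigenvalues. The proof then proceeds in three steps.

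First, the eigenspaces are $B_1$-orthogonal. Take $x\in V_\lambda$ and $y\in V_\mu$ with $\lambda\neq\mu$. Self-adjointness gives $\lambda B_1(x,y)=B_1(Mx,y)=B_1(x,My)=\mu B_1(x,y)$, hence $B_1(x,y)=0$.

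Second, $B_1$ restricted to each $V_\lambda$ is nondegenerate. Because the decomposition is orthogonal, the Gram matrix of $B_1$ in an adapted basis is block diagonal. It is also invertible, being congruent to $A_1$, so each diagonal block is invertible.

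Third, within each $V_\lambda$ I apply Theorem~\ref{thm: sym matrices diagonalizable by congruence}. This gives a $B_1$-orthogonal basis of $V_\lambda$, and this step is where characteristic $\neq 2$ is needed. On $V_\lambda$ we have $B_2(x,y)=B_1(x,My)=\lambda B_1(x,y)$, so the same basis is also $B_2$-orthogonal.

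Concatenating these bases over all eigenspaces gives a basis $v_1,\dots,v_n$ that is orthogonal for both forms. Let $R$ be the matrix with columns $v_j$. Then $R^\top A_iR$ is diagonal for $i=1,2$, and setting $Q=(R^\top)^{-1}$ yields $A_i=QD_iQ^\top$.

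The main subtle point is the second step, nondegeneracy of $B_1$ on each eigenspace, because Theorem~\ref{thm: sym matrices diagonalizable by congruence} alone would allow zero diagonal entries. Those zeros are harmless here, since I only need orthogonal bases and not nondegenerate diagonal entries. Even so, I will state the block argument explicitly, because it is what justifies that the concatenated basis diagonalizes $A_1$ with invertible $Q$. The remaining facts are routine: that the eigenspaces span $\F^n$ (diagonalizability) and that their eigenvalues lie in $\F$ (implicit in $D$ having entries in $\F$).
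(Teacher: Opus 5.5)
Your proof is correct, but it takes a different route from the paper. The paper gives no argument of its own: it presents the statement as the two-matrix specialization of the SDC criterion in \citet[Theorem~7]{BUSTAMANTE20}, which is formulated for complex symmetric matrices. You instead prove it from scratch. You observe that $M=A_1^{-1}A_2$ is $B_1$-self-adjoint, so its eigenspaces are $B_1$-orthogonal. Theorem~\ref{thm: sym matrices diagonalizable by congruence} then gives a $B_1$-orthogonal basis of each $V_\lambda$, and this basis is automatically $B_2$-orthogonal because $B_2=\lambda B_1$ on $V_\lambda$.

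The citation buys brevity. Your argument buys scope and transparency: it works verbatim over any field of characteristic $\neq 2$, with ``diagonalizable'' meaning diagonalizable over that field. That is exactly what is needed to use the criterion over $\F_q$ with $q$ odd, which is the setting the paper cares about. There, for invertible $A_1$, the pair is SDC iff $A_1^{-1}A_2$ is diagonalizable over $\F_q$. For $n=2$ this fails whenever the characteristic polynomial of $A_1^{-1}A_2$ is irreducible over $\F_q$, which heuristically happens about half the time and accounts for the value $1/2$ in Conjecture~\ref{conj: (2_2_2) over F_p}. Your proof also shows where the characteristic enters, and the hypothesis is genuinely needed. Over $\F_2$, take $A_1=A_2=\begin{pmatrix}0&1\\1&0\end{pmatrix}$. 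Then $A_1^{-1}A_2=I$, yet $A_1$ is congruent to no diagonal matrix, since $x^\top A_1x=0$ for all $x$.

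Two small repairs. First, your second step (nondegeneracy of $B_1$ on each $V_\lambda$) is true but unnecessary, and the reason you give for it is off. Theorem~\ref{thm: sym matrices diagonalizable by congruence} gives an orthogonal basis for any symmetric form, degenerate or not. Invertibility of $R$ comes from $\F^n=\bigoplus_\lambda V_\lambda$, not from the blocks, and invertibility of $D_1$ then follows from that of $A_1$. Second, you should state the cross-term explicitly: $B_2(x,y)=B_1(x,My)=\mu B_1(x,y)=0$ for $x\in V_\lambda$, $y\in V_\mu$, $\lambda\neq\mu$. This is what makes the concatenated basis $B_2$-orthogonal. Also, the last factor in your $(\Rightarrow)$ display is simply $Q^\top$.
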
 
    
    \begin{proof} [Proof of Theorem \ref{thm: zariski closure (n_n_2) over C filling}]
        We wish to show the \textit{Zariski closure} of $\mathcal{P}_{(n,n,2),r}(\C)$ is the entire ambient space $S_2(\mathbb{C}^n)\times S_2(\mathbb{C}^n)$, for which we shall apply Corollary \ref{lem: 2-tuple sdc}.

        If $A_1$ is full rank and $A_1^{-1}A_2$ has no repeat eigenvalues, then its Jordan normal form is diagonal. That is, there will exist a change of basis matrix $Q$ such that $A_1^{-1}A_2 = QDQ^{-1}$, and $A_1,A_2$ will be SDC by our corollary. 

        $B:= A_1^{-1}A_2 $ has no repeat eigenvalues if and only if the discriminant $\Delta$ of its characteristic polynomial $f(x) = \operatorname{charpoly}(B) = \det(B-Ix)$ is non-zero. The discriminant of a polynomial can be expressed as the \textit{resultant} of the polynomial $f(x)$ and its derivative $f'(x)$. Notably, $f(x)$ and $f'(x)$ are polynomials whose coefficients are polynomials in the entries of $A_1, A_2$, and the resultant is a polynomial in said coefficients. Hence, the resultant is a polynomial in the entries of $A_1, A_2$, and thus 
        $$V_1: \quad \Delta (\operatorname{charpoly}(B)) = 0$$
        is an algebraic variety in the entries of $A_1, A_2$. 

        Meanwhile, the set $ \{\det A_1 = \det A_2 = 0\}$ is also an algebraic variety. Specifically, $\{\det A_1 = \det A_2 = 0\}$ is the variety $$V_2 := \cap_{\substack{i \in \{1,2\} \\ 1 \leq j,k \leq n}} V_{i,j,k}$$ where $$V_{i,j,k} := \{\det (A_i)_{j,k} = 0\}$$ is the variety given by the determinant of the $j,k$-th minor of $A_i$. We then have $\left(S_2(\mathbb{C}^n)\right)^2 \setminus (V_1 \cup V_2)$ as a subset of all 2-tuples $A_1,A_2$ which are SDC. Thus,
        \[
            S_2(\mathbb{C}^n) \setminus (V_1 \cup V_2) \subseteq \mathcal{P}_{(n,n,2),2}.
        \]
        
        $V_1 \cup V_2$ is a proper Zariski-closed subset of $\left(S_2(\mathbb{C}^n)\right)^2 \cong \mathbb{C}^{(n+1)n}$. Hence, its complement $\left(S_2(\mathbb{C}^n)\right)^2 \setminus (V_1 \cup V_2)$ is a non-empty Zariski-open set. Non-empty Zariski-open sets are dense in $\mathbb{C}^k$ for any natural number $k$. Therefore, the Zariski closure of $\left(S_2(\mathbb{C}^n)\right)^2 \setminus (V_1 \cup V_2)$ (and hence of the superset $\mathcal{P}$) is the entire ambient space $\left(S_2(\mathbb{C}^n)\right)^2$, as desired.
    \end{proof}
    One might hope $\mathcal{P}_{(n,n,2),r}(\mathbb{F}_q)$ behaves similarly to its analogue over $\mathbb{C}$. Previously, when $k = 1$, the neuromanifold $\mathcal{P}_{(n,n,1),r}$ fills the ambient space over both $\mathbb{C}$ and $\F_q$. However, their patterns diverge for $k> 1$.

    For example, if $k=2$, the neuromanifold $\mathcal{P}_{(n,n,2),r}(\C)$ contains an open Zariski set whose closure fills the entire ambient space. Contrastingly, we observe the neuromanifold $\mathcal{P}_{(n,n,2),r}(\mathbb{F}_q)$ only fills approximately half of the ambient space.

    \begin{table}[!h]
        \centering
        \begin{tabular}{c|ccc}
             $p$ & $|\mathcal{P}_{(2,2,2), r = 2}(\F_p)|$ & $|S^2(\F_p^2)^2|$ & $\delta_{\dd,r}(p)$ \\ \hline
             3 & 393 & 729 & 0.539...\\
             5 & 7945 & 15625 & 0.509...\\
             7 & 59185 & 117649 & 0.503...\\
             11 & 2415673 & 4826809 & 0.500...
        \end{tabular}
        \caption{Arithmetic Expressivity of $\mathcal{P}_{(2,2,2), r = 2}(\F_p)$}
        \label{eq: 222-r=2}
    \end{table}
    \begin{conjecture} \label{conj: (2_2_2) over F_p}
        The arithmetic expressive capacity of $\mathcal{P}_{(2,2,2),r=2}(\F_p)$ approaches $\frac{1}{2}$ as $p \to \infty$:
        \[
            \lim_{p\to\infty}\delta_{(2,2,2),2}(p)=1/2.
        \]
    \end{conjecture}

    To gain some intuition for Conjecture~\ref{conj: (2_2_2) over F_p}, we compare with the classical picture over $\C$. Over $\mathbb{C}$, two generic (and thus invertible) $2 \times 2$ symmetric matrices $A,B$ are SDC precisely when $A^{-1}B$ is diagonalizable via similarity~\citep{BUSTAMANTE20}. Diagonalizability fails generically if and only if $A^{-1}B$ has discriminant zero (a non-zero discriminant indicates distinct roots of the characteristic polynomial, which implies diagonalizability). This is a Zariski-closed, or probability zero, event. Consequently, over $\C$, the condition for $SDC$ is generally satisfied, and so $\mathcal{P}_{(n,n,2),r}(\C)$ is a Zariski-dense set. 

    \begin{remark}
        This suggests that studying neuromanifolds over $\overline{\F}_q$ is more interesting from a theoretical perspective. However, from a more practical point of view, understanding the expressivity of networks over $\F_q$ helps with understanding the expressivity of networks with quantized weights.  
    \end{remark}
    
    We propose the following two open questions that can advance our understanding of neuromanifolds over finite fields with architectures where $k > 1$. The first concern is finding architectures whose expressive capacity converges to a non-zero fraction.
    \begin{problem} 
        Find all architectures $\dd=(n,m,k)$ over a prime field $\F_p$ whose arithmetic expressive capacity $\delta_{\dd,r}(p)$ converges to a non-zero fraction $0<a/b<1$ as $p \to \infty$.
    \end{problem}
    \textbf{The case $r>2$.}
    The second question is a generalization of the problem of SDC from symmetric matrices to symmetric tensors. The problem of simultaneous diagonalization of symmetric tensors, when viewed through the lens of homogeneous forms, connects naturally to Waring-type problems studied in~\citet{CARLINI2003}. Given homogeneous forms $f_1,\dots,f_k$ of degree $r$, the goal is to find linear forms $L_1,\dots,L_m$ and coefficients $c_{ij}$ such that
    \begin{equation}
    \label{eq: waring sim decomposition}
        f_i = c_{i1} L_1^r + \cdots + c_{im} L_m^r, \quad i = 1,\dots,k.
    \end{equation}
    
    \begin{lemma} 
    \label{lem: sdc tensors lower bound}
        The set of SDC $k$-tuples of $n^{\otimes r}$ tensors is isomorphic to a subset of $\mathcal{P}_{(n,n,k),r}$. Therefore, the number of SDC $k$-tuples is a lower bound for a neuromanifold $|\mathcal{P}_{(n,n,k),r}|$. 
    \end{lemma}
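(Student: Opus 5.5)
The approach is to write down explicitly what it means for a $k$-tuple of symmetric order-$r$ tensors to be SDC, and then read off network weights from that decomposition. Following the matrix case, we call $(A_1,\dots,A_k)$, with $A_i\in S^r(\F_q^n)$, SDC if there exist $Q\in GL_n(\F_q)$ with rows $L_1,\dots,L_n$ and diagonal tensors $D_i=\sum_{j=1}^n d_{ij}\, e_j^{\otimes r}$ such that $A_i$ is obtained by applying $Q^\top$ in every mode of $D_i$. Equivalently,
\[
A_i=\sum_{j=1}^n d_{ij}\,L_j^{\otimes r},\qquad i=1,\dots,k,
\]
or, in the language of forms, $f_i=\sum_j d_{ij}L_j(\xx)^r$.

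The first step is to compare this with \Cref{eq: nmk architecture any r} for $m=n$. Given such a decomposition, we take $W_1=Q$, whose $j$-th row is $L_j$, and $W_2=(d_{ij})\in\F_q^{k\times n}$. Then \Cref{eq:general-nn-output} gives $f_{i,\ww}=\sum_j d_{ij}L_j^r$, so $\Psi_{(n,n,k),r}(\ww)=(A_1,\dots,A_k)$. Hence every SDC $k$-tuple lies in the image $\mathcal{P}_{(n,n,k),r}$.

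The second step is to make precise the phrase ``isomorphic to a subset''. The set of SDC tuples is a subset of $(S^r(\F_q^n))^k$, and the inclusion map, composed with the identification of homogeneous forms with symmetric tensors, is injective. In characteristic $\neq 2$, or whenever $p\nmid r!$, this identification is the natural isomorphism. In general we simply regard both sides as $k$-tuples of forms via the map $A\mapsto f$, and we define SDC directly on forms. This gives the required injection into $\mathcal{P}_{(n,n,k),r}$, and the cardinality inequality follows immediately.

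There is no serious obstacle. The only care needed is definitional: we must make sure that ``SDC'' for tensors uses the congruence action $Q^{\otimes r}$, and that the tensor-to-polynomial identification is injective when $p$ divides multinomial coefficients. We handle this by working directly with forms, as above, rather than through symmetric tensors.
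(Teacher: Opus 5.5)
Your proof is correct and is the paper's argument made explicit: the paper likewise takes $W_1$ to be the invertible matrix with rows $L_1,\dots,L_n$ and $W_2=(d_{ij})$ from the simultaneous Waring decomposition, so every SDC tuple lies in the image of $\Psi_{(n,n,k),r}$. One small correction to your aside: characteristic $\neq 2$ alone does not make the tensor-to-form map injective; for $p=r=3$, the distinct SDC tensors $e_1^{\otimes 3}+e_2^{\otimes 3}$ and $(e_1+e_2)^{\otimes 3}$ both give $x_1^3+x_2^3$, so you need $p>r$ there, which is why defining SDC directly on forms, as you do, is the right choice.
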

    \begin{proof}
        The first assertion follows directly from~\Cref{eq: waring sim decomposition}. The second assertion is an immediate consequence of the first.
    \end{proof}
    This lemma from choosing $W_1$ to be invertible, and motivates our second question.
    \begin{problem}
    \label{prob: SDC over C or Fq}
        Given a tuple of symmetric tensors $(A_1,A_2,\dots,A_k)$ over $\mathbb{C}$ or over $\F_q$, determine when they are simultaneously diagonalizable via congruence.
    \end{problem}
    
    Problem \ref{prob: SDC over C or Fq} is solved geometrically in the case $\dd = (3,3,2)$ and $r=2$ by~\citet{KUSEJKO15}. In the case $\dd=(n,m,k)$ and $r=2$, Problem \ref{prob: SDC over C or Fq} was solved by~\citet{BUSTAMANTE20} over $\mathbb C$.
    
\section{Shallow architectures $\dd = (n,1,k), (n,2,k)$} 
\label{sec:shallow-m=1-m=2}

    In this section, we study neuromanifolds through the perspective of projective space. One of the main motivations is that the projective point count, as applied in~\citet{BAEZ25}, can be used to understand the ``motivic'' structure of the corresponding complex variety over complex numbers via the Weil conjectures~\cite{BRYAN26}. In other words, studying neuromanifolds over finite fields $\F_q$ helps with understanding the geometry and structure of the neuromanifold and its Zariski closure over $\C$.

    \noindent
    \textbf{Projective space.} Let $\F_q$ be a finite field. Recall that the neuromanifold $\mathcal{P}_{\dd,r}$ is a subset of $\F_q^N$, where $N=\dim(S^r(\F_q^n))^k$ is the dimension of the ambient space over $\F_q$. \Cref{apx: proj space def} gives a brief introduction to a projective space $\P^{N-1}(\F_q)$. We define $\overline{\mathcal{P}_{\dd,r}}$ to be the image of $\mathcal{P}_{\dd,r}$ in the projective space under the map
    \[
        pr:\F_q^{N}\to \P^{N-1}(\F_q),\quad (x_1,\dots,x_N)\mapsto[x_1:\dots:x_N].
    \] 
    The following lemma relates the cardinality of $\overline{\mathcal{P}_{\dd,r}}$ with the non-projective $\mathcal{P}_{\dd,r}$.
    \begin{lemma} \label{lem: count projective vs non-projective}
            $|\mathcal{P}_{\dd,r}| = (q-1)|\overline{\mathcal{P}_{\dd,r}}|+1.$
    \end{lemma}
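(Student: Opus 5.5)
The plan is to show that $\mathcal{P}_{\dd,r}\setminus\{0\}$ is a union of full punctured lines through the origin, and then count orbits of the scaling action of $\F_q^\times$.

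\textbf{Step 1: the zero vector is in the image.} Taking all weights equal to zero gives the zero output, so $0\in\mathcal{P}_{\dd,r}$. This point accounts for the summand $1$. It has no image under $pr$, which is only defined on nonzero vectors.

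\textbf{Step 2: invariance under nonzero scaling.} Let $f_\ww\in\mathcal{P}_{\dd,r}$ and $\lambda\in\F_q^\times$. Replace $W_2$ by $\lambda W_2$ and keep $W_1$ fixed. By \Cref{eq:general-nn-output}, each $f_{i,\ww}$ is linear in the entries $b_{is}$, so the new output is exactly $\lambda f_\ww$. Hence $\lambda f_\ww\in\mathcal{P}_{\dd,r}$. Since the output is homogeneous of degree $1$ in $W_2$, this works for every $r$ and every characteristic; we do not need to extract an $r$-th root of $\lambda$.

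\textbf{Step 3: counting.} The multiplicative group $\F_q^\times$ acts freely on $\F_q^N\setminus\{0\}$, since $\lambda v=v$ with $v\neq 0$ forces $\lambda=1$. By Step 2, $\mathcal{P}_{\dd,r}\setminus\{0\}$ is a union of orbits, each of size $q-1$. The orbits are exactly the fibres of $pr$, so they are in bijection with $\overline{\mathcal{P}_{\dd,r}}$. Therefore
\[
|\mathcal{P}_{\dd,r}\setminus\{0\}| = (q-1)\,|\overline{\mathcal{P}_{\dd,r}}|,
\]
and adding the zero point from Step 1 gives the claim.

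The only real obstacle is Step 2, namely making sure scaling invariance holds with no assumption on $r$ or $p$. Scaling $W_2$ rather than $W_1$ avoids that difficulty.
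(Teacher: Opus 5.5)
Your proof is correct and follows essentially the same route as the paper: count the zero tuple separately, then note that nonzero scalars act freely on $\mathcal{P}_{\dd,r}\setminus\{0\}$, so each point of $\overline{\mathcal{P}_{\dd,r}}$ has exactly $q-1$ preimages. Your one addition is an explicit reason why $\mathcal{P}_{\dd,r}$ is closed under scaling: you absorb $\lambda$ into $W_2$, which works for every $r$ and $p$, whereas the paper only asserts this closure.
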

    \begin{proof}
        The $k$-tuple consisting of all $0$-matrices lies in $\mathcal{P}_{\dd,r}$. For all remaining $k$-tuples $(A_1, \ldots, A_k) \in \mathcal{P}_{\dd,r}$, scaling by any $\lambda \in \F_q \setminus \{0\}$ gives another point $\lambda(A_1, \ldots, A_k) \in \mathcal{P}_{\dd,r}$, distinct if $\lambda \neq 1$. Therefore,
        \[
            |\overline{\mathcal{P}_{\dd,r}}| = \frac{|\mathcal{P}_{\dd,r}| -1}{q-1}.
        \]
        Rearranging gives the desired result.
    \end{proof}
    
    In relation to~\Cref{eq: nmk architecture any r}, the decomposition of the $A_i$'s depends on the scalars $b_{ij}$ and the lines generated by $L_j$. Recall that each $L_j(\xx)$ is defined as the $j$-th row of the vector $W_1\xx$, i.e.,
    \[
        L_j(\xx) = a_{j1}x_1+\dots+a_{jn}x_n.
    \]

    \noindent
    \textbf{The case $m=1,2$}. Let $\mathbb{P}^k \coloneq \mathbb{P}^k(\F_q)$. For $m=1$, we have the following count.
    \begin{lemma} 
    \label{lem: n_1_k}
    For $\dd = (n,1,k)$, any $r$ and any prime $p$,
    \[
        |\overline{\mathcal{P}_{\dd,r}}| = |\overline{\mathcal{P}_{(n,1,1),r}}|\left |\mathbb{P}^{k-1}\right|= \frac{(q^n-1)(q^k-1)}{(q-1)^2}.
    \]
    \end{lemma}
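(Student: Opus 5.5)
The plan is to describe $\mathcal{P}_{(n,1,k),r}$ explicitly and then count projective classes. For $m=1$, \Cref{eq:general-nn-output} shows that the network output is
\[
f_{\ww}=(b_1L^r,\dots,b_kL^r),\qquad L=a_{1}x_1+\dots+a_{n}x_n,\quad \mathbf{b}=(b_1,\dots,b_k)\in\F_q^k .
\]
So every point of the neuromanifold has the form $\mathbf{b}\otimes L^r$. Since $\F_q[x_1,\dots,x_n]$ is an integral domain, this tuple is nonzero exactly when $\mathbf{b}\neq 0$ and $L\neq 0$. Scaling $\mathbf{b}$ or $L$ by nonzero constants only rescales the tuple. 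Hence there is a well-defined surjection
\[
\Phi:\P^{n-1}\times\P^{k-1}\to\overline{\mathcal{P}_{(n,1,k),r}},\qquad ([L],[\mathbf{b}])\mapsto[\mathbf{b}\otimes L^r].
\]
The lemma follows once $\Phi$ is shown to be injective. Injectivity gives
\[
|\overline{\mathcal{P}_{(n,1,k),r}}|=|\P^{n-1}|\,|\P^{k-1}|=\frac{(q^n-1)(q^k-1)}{(q-1)^2}.
\]
The special case $k=1$ gives $|\overline{\mathcal{P}_{(n,1,1),r}}|=|\P^{n-1}|$, which is the middle expression in the statement.

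The key step, and the only place where any care about the characteristic might seem necessary, is the following claim.

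\textbf{Claim.} If $L^r=cM^r$ in $\F_q[x_1,\dots,x_n]$ with $L,M$ nonzero linear forms and $c\in\F_q^\times$, then $L=\lambda M$ for some $\lambda\in\F_q^\times$.

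One might worry that when $p\mid r$ the Frobenius map interferes, or that multinomial coefficients vanish as in \Cref{lem:global_count_k=1}. I would avoid all of that by using unique factorization in $\F_q[x_1,\dots,x_n]$ rather than comparing coefficients:
\begin{itemize}
\item a nonzero linear form is irreducible, hence prime;
\item $L$ divides $cM^r$, so $L$ divides $M$;
\item both have degree $1$, so $L$ and $M$ are associates.
\end{itemize}
This argument works for every $r$ and every $p$.

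Injectivity of $\Phi$ then follows. Suppose $\mathbf{b}\otimes L^r=\mu\,\mathbf{b}'\otimes L'^r$ with $\mu\neq0$. Choose an index $i$ with $b_i\neq0$. Then $\mu b'_i\neq 0$ as well, since the left-hand side has a nonzero $i$-th entry. Comparing $i$-th entries, $b_iL^r=\mu b'_iL'^r$, so the Claim gives $L'=\lambda L$. Substituting back, $\mathbf{b}L^r=\mu\lambda^r\mathbf{b}'L^r$. Cancelling $L^r\neq0$ in the domain yields $\mathbf{b}=\mu\lambda^r\mathbf{b}'$, so $[\mathbf{b}]=[\mathbf{b}']$.

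Finally, \Cref{lem: count projective vs non-projective} converts this projective count into the affine count $|\mathcal{P}_{(n,1,k),r}|=(q^n-1)(q^k-1)/(q-1)+1$ listed in Table 1.
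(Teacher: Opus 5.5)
Your proof is correct, but the key injectivity step takes a different route from the paper's.

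The paper works with symmetric tensors in coordinates. After a change of basis making $L_1=e_1$, proportionality of $L_2^{\otimes r}$ and $L_1^{\otimes r}$ forces $(L_2)_1\neq 0$. The entry $(L_2)_1^{r-1}(L_2)_j$ then forces $(L_2)_j=0$ for $j\neq 1$. The paper then notes that for fixed $[L]$ the class $[b_1:\dots:b_k]$ determines the tuple, giving $\overline{\mathcal{P}_{\dd,r}}\cong\P^{n-1}\times\P^{k-1}$.

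You instead stay in the polynomial ring. You use that a nonzero linear form is prime in the UFD $\F_q[x_1,\dots,x_n]$, so $L^r=cM^r$ forces $L\mid M$. You then recover $[\mathbf{b}]$ by cancelling $L^r$ in the domain.

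What your route buys is a coordinate-free argument that is uniform in $r$ and $p$, in the ambient space $S^r(\F_q^n)$ the paper actually uses. In that space, the entry the paper inspects becomes the coefficient $r(L_2)_1^{r-1}(L_2)_j$ of $x_1^{r-1}x_j$. This coefficient vanishes identically when $p\mid r$, so the paper's step, read at the level of polynomials, needs a small adjustment (e.g.\ comparing the coefficients $(L_2)_j^r$ of $x_j^r$ instead). That is exactly the issue you anticipated and sidestepped. You are also more explicit than the paper about well-definedness and surjectivity of $\Phi$, and about injectivity in the $\mathbf{b}$-coordinate.

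The paper's coordinate computation, for its part, uses the same tensor language it relies on in the $m=2$ case.
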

    \begin{proof}
        The elements of $\overline{\mathcal{P}_{\dd,r}}$ are $k$-tuples of symmetric tensors $\left[A_1:...:A_k\right]$ up to scaling, where
        $$A_i = b_i L^{\otimes r}.$$
    
        If we fix $L$ up to scaling, then each $A_i$ lies on the line in $S_r(\F_q^n)$ generated by $L^{\otimes r}$. We claim different choices of $\left[L\right]$ (choices of $L$ up to scaling) partition $\overline{\mathcal{P}_{\dd,r}}$. In other words, we claim given distinct $\left[L_1\right]$, $\left[L_2\right] \in \P(\mathbb{F}_q^n)  \cong \mathbb{P}^{n-1}$ over $\F_q$, the resulting $L_1^{\otimes r}$ and $L_2^{\otimes r}$ define different lines in $S_r(\F_q^n)$ (alternatively stated, $L_1^{\otimes r}$ and $L_2^{\otimes r}$ are not scalar multiples of each other). 
    
        We can assume by change of basis that $L_1 = e_1$ is a standard basis vector. Then, $L_1^{\otimes r}$ is an order-$r$ symmetric tensor with a $1$ at the entry indexed by $(1,\ldots,1)$, and zero everywhere else. If $L_2 = \lambda L_1$ where $\lambda \neq 0$, then the first entry of $L_2$, denoted $(L_2)_1$, must be non-zero. Meanwhile, the $j$-th entry of $L_2$, denoted $(L_2)_j$, must be zero, since there exists an entry corresponding to $(L_2)_1^{r-1}(L_2)_j = 0$ for all $j \neq 1$. Therefore, $L_2$ is a multiple of $e_1 = L_1$. Therefore, if $\left[L_1^{\otimes r}\right] = \left[L_2^{\otimes r}\right]$ in $\P(S_r(\F_q^n))$, then $\left[L_1\right] = \left[L_2\right]$. 
        
        So, choice of $\left[L\right]$ partition $\overline{\mathcal{P}_{\dd,r}}$. We furthermore observe, given a choice of $\left[L\right] \in \P(\F_q^n) \cong \P^{n-1}$, $\left[b_1:...:b_k\right] \in \mathbb{P}^{k-1}$ determines $\left[A_1:...:A_k\right]$ uniquely. Thus,
        $$\overline{\mathcal{P}_{\dd,r}} \cong \P^{n-1} \times \mathbb{P}^{k-1}.$$
    
        By removing the origin and dividing by $q-1$ to account for scalars,
        $$|\P^{i-1}| = \frac{|\F_q^{i}| - 1}{q-1} = \frac{q^i-1}{q-1}$$
        for all $i \in \mathbb{N}$. Therefore,
        $$|\overline{\mathcal{P}_{\dd,r}}| = |\mathbb{P}^{n-1}| |\mathbb{P}^{k-1}| = \frac{(q^n-1)(q^k-1)}{(q-1)^2}.$$
    \end{proof}
    For larger values of $m$, the space $\overline{\mathcal{P}_{\dd,r}}$ can be partitioned by the dimension of the span of the $k$-tuple $(A_1,\dots,A_k)$ as a vector space over $\F_q$. This allows us to describe $\overline{\mathcal{P}_{\dd,r}}$ in terms of spaces associated with CP-rank smaller than $m$, by studying how the rank-1 components combine linearly. For $m = 2$, this yields~\Cref{eq: (n_2_k)-r>1} and~\Cref{eq: (n_2_k)-r = 1}, depending on whether $r>1$ or $r = 1$.
    \begin{proposition}\label{prop: (n_2_k)-r>1}
        For $\dd = (n,2,k)$, $p \nmid r$, $r > 1$,
        \begin{equation} \label{eq: (n_2_k)-r>1}
            \left|\overline{\mathcal{P}_{\dd, r}}\right| = \left|\overline{\mathcal{P}_{(n,2,1), r}}\right| \left|\mathbb{P}^{k-1}\right| + \binom{\left|\overline{\mathcal{P}_{(n,1,1), r}}\right|}{2}\frac{N_{q}(2,k,2)}{q-1}
        \end{equation}
        where $N_q(n,k,m)$ represents the number of $k\times n$ matrices of rank $m$ over $\F_q$ (see~\Cref{eq: k x n matrices of rank m}).
    \end{proposition}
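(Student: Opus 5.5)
Following the strategy sketched before the statement, we partition the nonzero points of $\overline{\mathcal{P}_{\dd,r}}$ according to the dimension of the span $U=\langle A_1,\dots,A_k\rangle\subseteq S^r(\F_q^n)$. For $m=2$ every point has the form $A_i=b_{i1}L_1^{r}+b_{i2}L_2^{r}$, so $U\subseteq\langle L_1^r,L_2^r\rangle$ and $\dim U\in\{1,2\}$. The two summands of \Cref{eq: (n_2_k)-r>1} will count these two strata.

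\emph{Stratum $\dim U=1$.} Here $(A_1,\dots,A_k)=(c_1A,\dots,c_kA)$ with $A\neq0$ and $c\neq0$. We claim $A\in\mathcal{P}_{(n,2,1),r}$. Indeed, if $c_{i}\neq 0$ then $A=c_{i}^{-1}A_{i}=c_{i}^{-1}(b_{i1}L_1^r+b_{i2}L_2^r)$. Conversely, any such $A$ together with any $c$ gives a point of the stratum, by taking $W_2=c\,(\beta_1,\beta_2)$. The projective class of the tuple is the class of $A\otimes c$. Because rank-one tensors $A\otimes c$ determine the pair $([A],[c])$, this stratum is in bijection with $\overline{\mathcal{P}_{(n,2,1),r}}\times\mathbb{P}^{k-1}$, which gives the first term. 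This is the same argument as in the proof of Lemma~\ref{lem: n_1_k}.

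\emph{Stratum $\dim U=2$.} Then $L_1^r$ and $L_2^r$ must be linearly independent, and $U=\langle L_1^r,L_2^r\rangle$. By the proof of Lemma~\ref{lem: n_1_k}, this happens exactly when $[L_1]\neq[L_2]$ in $\mathbb{P}^{n-1}$, and then $\{[L_1^r],[L_2^r]\}$ is an unordered pair of distinct points of $\overline{\mathcal{P}_{(n,1,1),r}}$. The tuple corresponds to a $k\times 2$ coefficient matrix $B$ of rank $2$ with respect to the basis $(L_1^r,L_2^r)$. Counting affine points first, each unordered pair and each choice of $B$ gives a tuple. We must then quotient by the ambiguity in the representation, and this is the crux of the argument.

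\emph{Main obstacle: uniqueness.} We need that the $2$-plane $U$ determines the unordered pair $\{[L_1^r],[L_2^r]\}$. Equivalently, we need that the only projective points of the line $\mathbb{P}(U)$ that are $r$-th powers $L^r$ are $[L_1^r]$ and $[L_2^r]$. This is where $p\nmid r$ and $r>1$ enter. After a change of coordinates we may take $L_1=x_1$ and $L_2=x_2$. Suppose $\alpha x_1^r+\beta x_2^r=L^r$ with $\alpha\beta\neq0$. Then $L$ involves only $x_1,x_2$, and its coefficients $l_1,l_2$ are both nonzero. Expanding, the coefficient of $x_1^{r-1}x_2$ is $r\,l_1^{r-1}l_2\neq0$, because $p\nmid r$ and $r\geq2$. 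This is a contradiction. So the plane determines the pair, and once the basis is fixed, $B$ is unique up to reordering the basis.

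Once the pair $\{L_1^r,L_2^r\}$ is fixed as tensors, the pair of lines determines the basis vectors only up to scalars. However, we take $L_i^r$ to be fixed representatives of the projective points, absorb the scalars into $B$, and use ordered columns matched to a chosen ordering of the pair. This gives exactly $N_q(2,k,2)$ affine tuples per unordered pair. Projectivizing divides by $q-1$, which yields the second term $\binom{|\overline{\mathcal{P}_{(n,1,1),r}}|}{2}N_q(2,k,2)/(q-1)$. Summing the two disjoint strata proves the formula. The care needed is in the bookkeeping of representatives versus scalars in this last step, together with the $r$-th power uniqueness argument above.
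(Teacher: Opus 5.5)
Your proposal is correct and follows the paper's proof: you use the same split by $\dim\langle A_1,\dots,A_k\rangle\in\{1,2\}$, the same identification of the 1-dimensional stratum with $\overline{\mathcal{P}_{(n,2,1),r}}\times\mathbb{P}^{k-1}$, and the same count of the 2-dimensional stratum (unique unordered pair $\{[L_1^r],[L_2^r]\}$ times rank-2 coefficient matrices, divided by $q-1$). Your uniqueness step is sharper than the paper's: the paper argues in tensor language that $c_1L_1^{\otimes r}+c_2L_2^{\otimes r}$ has symmetric rank 2, whereas you work directly with polynomials and exhibit the nonvanishing coefficient $r\,l_1^{r-1}l_2$ of $x_1^{r-1}x_2$, which is exactly where $p\nmid r$ enters (it fails for $r=p$, since $x_1^p+x_2^p=(x_1+x_2)^p$).
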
 
    \begin{proof}
        The elements of $\overline{\mathcal{P}_{\dd, r}}$ are projective equivalence classes of ordered $k$-tuples of $n^{\otimes r}$ symmetric tensors $\left[A_1:...:A_k\right]$ of the form 
        $$A_i = b_{i1} L_1^{\otimes r} + b_{i2} L_2^{\otimes r}$$
        where $\left[L_1^{\otimes r}\right], \left[L_2^{\otimes r}\right] \in \overline{\mathcal{P}_{(n,1,1),r}}$ are equivalence classes of rank-1 $n^{\otimes r}$ symmetric tensors. 
    
        We consider $L_1^{\otimes r}$, $L_2^{\otimes r}$ as elements of a vector space over $\mathbb{F}_q$. Then, since we have two basis elements $L_1^{\otimes r}$ and $L_2^{\otimes r}$, $\left[A_1:...:A_k\right]$ spans 1 or 2 dimensions. 
    
        If the tensors in $\left[A_1:...:A_k\right]$ span 1 dimension, then, in fact, all $A_i$ are of the form
        $A_i = b_iA$
        for some $A \in \mathcal{P}_{(n,2,1),r}$. The choice of $\left[A\right]$ and parameterization $\left[B\right] = \left[b_1, b_2, ..., b_k\right] \in \mathbb{P}^{k-1}$ uniquely determines $\left[A_1:...:A_k\right]$. Hence, 
        $$\{\text{1-dimensional} \left[A_1:...:A_k\right]\} \cong \overline{\mathcal{P}_{(n,2,1), r}} \times \mathbb{P}^{k-1}$$ 
        and we obtain the term
        $$\left|\overline{\mathcal{P}_{(n,2,1), r}}\right| \left|\mathbb{P}^{k-1}\right|.$$
    
        In the case of a 2-dimensional span by $\left[A_1:...:A_k\right]$, we begin by arguing the choice of basis pair $\left[L_1^{\otimes r}\right],\left[L_2^{\otimes r}\right]$ partitions the set of 2-dimensional $\left[A_1:...:A_k\right]$ disjointly.
        
        For the sake of contradiction, suppose some 2-dimensional $\left[A_1:...:A_k\right]$ may be constructed by two different basis pairs $\left[L_1^{\otimes r}\right],\left[L_2^{\otimes r}\right]$ and $\left[L_3^{\otimes r}\right],\left[L_4^{\otimes r}\right]$. Since $\left[A_1:...:A_k\right]$ is 2-dimensional, the $A_i$ form an alternative spanning set for the space spanned by $L_1^{\otimes r}$ and $L_2^{\otimes r}$ (or $ L_3^{\otimes r}, L_4^{\otimes r}$) and we can therefore recover the original basis $L_1^{\otimes r},L_2^{\otimes r}$ and $ L_3^{\otimes r}, L_4^{\otimes r}$ as a linear combination of the $A_i$'s. Consequently, $L_3^{\otimes r}, L_4^{\otimes r}$ are both in the span of $L_1^{\otimes r},L_2^{\otimes r}$, and the two pairs form the same plane in $S_r(\F_q^n)$. Note that for the pairs to be distinct, at least one of these linear combinations of $L_1^{\otimes r},L_2^{\otimes r}$ is non-trivial. More precisely, there exists $c_1,c_2 \neq 0$ such that
        $$L_j^{\otimes r} = c_1L_1^{\otimes r} + c_2L_2^{\otimes r}$$
        where $j = 3$ or $4$.

        Because $\left[L_1^{\otimes r}\right] \neq \left[L_2^{\otimes r}\right]$, we have $L_1^{\otimes r}$, $L_2^{\otimes r}$ are linearly independent, and consequently so are $L_1$ and $L_2$. If we apply a change of basis such that $L_1 = (1, 0, 0, ..., 0)$ and $L_2 = (0, 1, 0, ..., 0)$ are standard basis vectors, then $c_1L_1^{\otimes r} + c_2L_2^{\otimes r}$ is an $n^{\otimes r}$ tensor with one $c_1$ and one $c_2$ along the diagonal and 0's everywhere else. Since $r > 1$, therefore this tensor is of symmetric CP-rank 2. In other words, there does not exist an $L_3$ such that $L_3^{\otimes r} = c_1L_1^{\otimes r} + c_2L_2^{\otimes r}$ where $c_1, c_2 \neq 0$. Therefore, we have a contradiction, and in fact, our choice of distinct $\left[L_1^{\otimes r}\right],\left[L_2^{\otimes r}\right]$ will partition the set of 2-dimensional $\left[A_1:...:A_k\right]$'s disjointly.
        
        Fixing a choice of distinct $\left[L_1^{\otimes r}\right]$ and $\left[L_2^{\otimes r}\right] \in \overline{\mathcal{P}_{(n,1,1),r}}$, $\left[A_1:...:A_k\right]$ is 2-dimensional if and only if the coefficients as vectors $\left(\begin{array}{c} b_{i1} \\ b_{i2}\end{array}\right)$ span two dimensions. That is, each possible $\left[A_1:...:A_k\right]$ is uniquely parameterized by a choice of basis pair $\left[L_1^{\otimes r}\right], \left[L_2^{\otimes r}\right] \in \overline{\mathcal{P}_{(n,1,1),r}}$ and some rank-2 matrix
        \[
        \left[(B)\right] = \left[\left(\begin{array}{cccc}
            b_{11} & b_{21} & \hdots & b_{k1} \\
            b_{12} & b_{22} & \hdots & b_{k2} 
        \end{array}\right)^T\right] \in \mathbb{P}(M^{2 \times k}).
        \]
        There are $\binom{\left|\overline{\mathcal{P}_{(n,1,1), r}}\right|}{2}$ ways to choose $\left[L_1^{\otimes r}\right],\left[L_2^{\otimes r}\right]$. Fixing $\left[L_1^{\otimes r}\right],\left[L_2^{\otimes r}\right]$, the number of choices of matrix projective point $\left[(B)\right]$ is $\frac{N_{q}(2,k,2)}{q - 1} = \frac{(q^k - 1)(q^k - q)}{q-1}$. Hence, we obtain our second term, the count of 2-dimensional $\left[A_1:...:A_k\right]$,
        \[
            \binom{\left|\overline{\mathcal{P}_{(n,1,1), r}}\right|}{2} \frac{N_{q}(2,k,2)}{q - 1}.
        \]
        Therefore,
        \begin{align*}
            \left|\overline{\mathcal{P}_{\dd, r}}\right| 
            &= \underbrace{\left|\overline{\mathcal{P}_{(n,2,1),r}}\right| \cdot \left|\mathbb{P}^{k-1}\right|}_{\text{$1$-dimensional span}}
            + \underbrace{\binom{\left|\overline{\mathcal{P}_{(n,1,1),r}}\right|}{2} \cdot \frac{N_q(2,k,2)}{q-1}}_{\text{$2$-dimensional span}}.
        \end{align*}
        as desired.
    \end{proof}

\begin{proposition}\label{prop: (n_2_k)-r = 1}
    For $\dd = (n,2,k)$, $r = 1$,
    \begin{equation} \label{eq: (n_2_k)-r = 1}
        \left|\overline{\mathcal{P}_{\dd, r}}\right| = \left|\overline{\mathcal{P}_{(n,2,1), r}}\right| \left|\mathbb{P}^{k-1}\right| + \frac{2}{(q+1)q}\binom{\left|\overline{\mathcal{P}_{(n,1,1), r}}\right|}{2}\frac{N_{q}(2,k,2)}{q-1}
    \end{equation}
\end{proposition}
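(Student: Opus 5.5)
The plan is to follow the structure of the proof of Proposition~\ref{prop: (n_2_k)-r>1}: split $\overline{\mathcal{P}_{\dd,1}}$ according to whether the span of $(A_1,\dots,A_k)$ has dimension $1$ or $2$. The only change is in how the $2$-dimensional part is indexed. When $r=1$ each $A_i=b_{i1}L_1+b_{i2}L_2$ is just a linear form, that is, a vector in $\F_q^n$. The tuple $(A_1,\dots,A_k)$ is therefore the $k\times n$ matrix $W_2W_1$, which has rank at most $2$. Since rank-one tensors $L^{\otimes 1}=L$ now do combine linearly, the disjointness argument of Proposition~\ref{prop: (n_2_k)-r>1} fails. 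For $r=1$, $c_1L_1+c_2L_2$ is again of the form $L_3$, so many pairs $[L_1],[L_2]$ give the same plane. The fix is to index the $2$-dimensional tuples by the plane itself rather than by an unordered pair of points.

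\emph{One-dimensional span.} Exactly as before, $A_i=b_iA$ with $[A]\in\overline{\mathcal{P}_{(n,2,1),1}}$ and $[b_1:\dots:b_k]\in\mathbb{P}^{k-1}$. Together these determine the class uniquely, which gives the term $|\overline{\mathcal{P}_{(n,2,1),1}}||\mathbb{P}^{k-1}|$. The argument is unchanged, since it never used $r>1$.

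\emph{Two-dimensional span.} Here the row space $U=\operatorname{span}(A_1,\dots,A_k)$ is a $2$-dimensional subspace of $\F_q^n$, and different $U$ give disjoint sets of tuples. Conversely, every plane $U$ occurs: choose a basis $L_1,L_2$ of $U$ as the rows of $W_1$. With $U$ and this basis fixed, the tuples whose row space is exactly $U$ correspond bijectively to coefficient matrices $B\in\F_q^{k\times 2}$ of rank $2$. There are $N_q(2,k,2)$ of these by~\Cref{eq: k x n matrices of rank m}, and hence $N_q(2,k,2)/(q-1)$ projective classes. The number of planes is the Gaussian binomial $\qbinom{n}{2}=\frac{(q^n-1)(q^n-q)}{(q^2-1)(q^2-q)}$.

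\emph{Matching the stated form.} It remains to rewrite $\qbinom{n}{2}$ in terms of $|\overline{\mathcal{P}_{(n,1,1),1}}|$. By Lemma~\ref{lem: n_1_k} with $k=1$, this set has $|\mathbb{P}^{n-1}|=\frac{q^n-1}{q-1}$ elements. Using $|\mathbb{P}^{n-1}|-1=\frac{q^n-q}{q-1}$ we get
\[
\binom{|\mathbb{P}^{n-1}|}{2}=\frac{(q^n-1)(q^n-q)}{2(q-1)^2}.
\]
Comparing the two expressions,
\[
\qbinom{n}{2}=\frac{(q^n-1)(q^n-q)}{q(q+1)(q-1)^2}=\frac{2}{q(q+1)}\binom{|\mathbb{P}^{n-1}|}{2}.
\]
This is exactly the correction factor in~\Cref{eq: (n_2_k)-r = 1}. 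Adding the two contributions gives the formula.

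The main obstacle is the change of viewpoint from pairs of rank-one points to planes: one must check carefully that the row space is a complete and disjoint invariant for the $2$-dimensional tuples. The rest is the short Gaussian-binomial identity above.
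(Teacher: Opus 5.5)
Your proof is correct, but it organizes the $2$-dimensional count differently from the paper.

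The paper keeps the indexing of Proposition~\ref{prop: (n_2_k)-r>1} by unordered pairs $\{[L_1],[L_2]\}$. It observes that for $r=1$ every class with a $2$-dimensional span is produced by each unordered pair of distinct projective points of its plane. There are $\frac{(q^2-1)(q^2-q)}{2(q-1)^2}=\frac{q(q+1)}{2}$ such pairs, so the paper divides the $r>1$ count by this overcounting factor.

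You instead index directly by the row space $U$. This gives an honest bijection between these classes and pairs (plane, projective class of a rank-$2$ matrix $B$). You count planes with $\qbinom{n}{2}$ and then check algebraically that $\qbinom{n}{2}=\frac{2}{q(q+1)}\binom{|\mathbb{P}^{n-1}|}{2}$. That identity is precisely the double count $\#\{\text{pairs of points}\}=\#\{\text{planes}\}\cdot\#\{\text{pairs per plane}\}$ behind the paper's argument, so the two proofs are equivalent in content.

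The paper's phrasing makes the factor $\frac{2}{q(q+1)}$ appear naturally as a correction to the $r>1$ formula. Yours avoids the overcounting bookkeeping entirely. It also shows that the second term is simply $\qbinom{n}{2}N_q(2,k,2)/(q-1)$, the number of projective classes of rank-$2$ matrices in $\F_q^{k\times n}$. That makes consistency with Proposition~\ref{prop: r = p^i} immediate.
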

\begin{proof}
    Compared to the previous proposition, the term corresponding to $k$-tuples $\left[A_1: \ldots: A_k\right]$ with a 1-dimensional span remains unchanged. 
    
    However, the contribution from $k$-tuples $\left[A_1: \ldots: A_k\right]$ with 2-dimensional span is divided by $\frac{q(q+1)}{2}$. This factor arises because, when $r = 1$, linear combinations of $L_1^{\otimes r}, L_2^{\otimes r}$ are no longer rank 2, but rank 1. More specifically, we have
    $$c_1 L_1^{\otimes r}+ c_2L_2^{\otimes r} = c_1 L_1 + c_2L_2 = L_3 = L_3^{\otimes r}.$$
    so any linear combination is itself another vector $L_3 \in \F_q^n$. 
    
    Consequently, the choice of basis pair $\left[L_1\right], \left[L_2\right]$ corresponding to the plane spanned by $\left[A_1: \ldots: A_k\right]$ is no longer unique, unlike in the case $r > 1$ considered in Proposition \ref{prop: (n_2_k)-r>1}. For instance, one could take $L_3 = L_1 + L_2$ and $L_1$ as the spanning pair instead. In fact, any two linearly independent elements of the plane will suffice as a basis, since with $r =1$, each tensor in the plane is a vector.
    
    The number of unordered pairs of linearly independent elements in a 2-dimensional subspace over $\F_q$ is 
    \[
    \frac{(q^2 - 1)(q^2 - q)}{2}.
    \]
    After projectivising (i.e., accounting for scaling by a non-zero scalar), we have
    \[
    \frac{(q^2 - 1)(q^2 - q)}{2} \cdot \frac{1}{(q-1)^2} = \frac{q(q+1)}{2}.
    \]
    Hence, each $2$-dimensional $\left[A_1: \ldots: A_k\right]$ can be attained by $\frac{q(q+1)}{2}$ distinct basis pairs $\left[L_1^{\otimes r}\right], \left[L_2^{\otimes r}\right]$. Therefore, we divide by this factor to adjust for overcounting.
\end{proof}

\begin{remark}
    The proof of Proposition~\ref{prop: (n_2_k)-r = 1} relies on the fact that no nontrivial linear combination
    \[
    c_1 L_1^{\otimes r} + c_2 L_2^{\otimes r} \qquad (c_1,c_2 \neq 0)
    \]
    is a symmetric rank–1 tensor when \(r>1\). For any \(k\)-tuple \(\left[A_1:\cdots:A_k\right]\) spanning a \(2\)-dimensional space, its generating pair
    \(
    \left[L_1^{\otimes r}\right], \left[L_2^{\otimes r}\right]
    \)
    is thus uniquely determined.
    
    This yields a disjoint parametrization by basis pairs, making combinatorial 
    counting in the \(m=2\) case tractable.
    
    For \(m \ge 3\), this uniqueness fails.  
    A \(3\)-dimensional span of symmetric rank–1 tensors may admit multiple distinct 
    decompositions into triples 
    \(\{L_1^{\otimes r}, L_2^{\otimes r}, L_3^{\otimes r}\}\). 
    As a result, the clean partition used for \(m=2\) no longer exists and, consequently, the combinatorics become substantially more involved. For this reason, we currently do not have analogous formulae for larger values of $m$.
\end{remark}
The next section will address the case $p \mid r$. In particular, Proposition \ref{prop:architecture-M_dd_ip_eq_M_dd_i} will state
\begin{equation} \label{eq: reduction}
    |\mathcal{P}_{\dd, ip}| = |\mathcal{P}_{\dd, i}|,
\end{equation}
which allows us to reduce architectures with $p \mid r$ to the situation $p \nmid r$ considered in the previous two propositions. As a first application of this identity, we obtain the following corollary that generalizes the prior two propositions to the cases where $p$ may divide $r$.
\begin{corollary} \label{lem: dd_n_2_k}
    For $\dd = (n,2,k)$, if $r \neq p^i$, $\left|\overline{\mathcal{P}_{\dd, r}}\right|$ satisfies~\Cref{eq: (n_2_k)-r>1}. If $r = p^i$, then $\left|\overline{\mathcal{P}_{\dd, r}}\right|$ satisfies~\Cref{eq: (n_2_k)-r = 1}.
\end{corollary}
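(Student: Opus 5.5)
We reduce both cases to Propositions~\ref{prop: (n_2_k)-r>1} and~\ref{prop: (n_2_k)-r = 1} using the forthcoming identity~\Cref{eq: reduction}, $|\mathcal{P}_{\dd,ip}|=|\mathcal{P}_{\dd,i}|$. The underlying mechanism is the Frobenius. Write $r=p^s r'$ with $p\nmid r'$. Since $(a_1x_1+\dots+a_nx_n)^{p^s}=\sum_j a_j^{p^s}x_j^{p^s}$ and $a\mapsto a^{p^s}$ is a bijection of $\F_q$, the outputs of architecture $(\dd,r)$ are exactly the images of the outputs of $(\dd,r')$ under the injective additive substitution $x_j\mapsto x_j^{p^s}$, after relabelling the first-layer weights. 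Concretely, $\sum_s b_{is}L_s^{r}=\bigl(\sum_s b_{is}\tilde L_s^{r'}\bigr)(x_1^{p^s},\dots,x_n^{p^s})$, where $\tilde L_s$ has coefficients $a_{sj}^{p^s}$, and these coefficients range over all of $\F_q$.

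First we make sure this substitution commutes with scaling, so that it gives a bijection $\mathcal{P}_{\dd,r}\cong\mathcal{P}_{\dd,r'}$ which sends $0$ to $0$ and is $\F_q^\times$-equivariant. Scalars multiply only the $b$-coefficients, and the substitution is linear over $\F_q$, so equivariance holds. By Lemma~\ref{lem: count projective vs non-projective}, the bijection then descends to $\overline{\mathcal{P}_{\dd,r}}\cong\overline{\mathcal{P}_{\dd,r'}}$. The same argument applies to the auxiliary architectures $(n,2,1)$ and $(n,1,1)$: the quantities $|\overline{\mathcal{P}_{(n,2,1),r}}|$ and $|\overline{\mathcal{P}_{(n,1,1),r}}|$ coincide with their $r'$ counterparts. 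The remaining factors $|\P^{k-1}|$ and $N_q(2,k,2)$ do not depend on $r$, so the right-hand side of each formula is invariant under $r\mapsto r'$.

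The argument now splits into two cases.

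\emph{Case $r\neq p^i$.} Here $r'>1$ and $p\nmid r'$, so Proposition~\ref{prop: (n_2_k)-r>1} applies to $r'$. Transporting it back through the invariance above gives~\Cref{eq: (n_2_k)-r>1} for $r$.

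\emph{Case $r=p^i$.} Here $r'=1$, and Proposition~\ref{prop: (n_2_k)-r = 1} gives~\Cref{eq: (n_2_k)-r = 1}. This case requires the convention that $r=p^0=1$ falls under it. That convention is consistent, since the case $r=1$ is exactly the one excluded from the first statement.

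The main obstacle is justifying~\Cref{eq: reduction} carefully, since it is only stated later in the paper. If we cannot simply cite it, we would prove the single step $r=pi\mapsto i$ directly via the Frobenius identity $L^{pi}=(L^{(p)})^{i}(x^p)$ and iterate it. We must also check that the substitution $x_j\mapsto x_j^p$ is injective on $(S^{i}(\F_q^n))^k$. It is, because distinct monomials go to distinct monomials.
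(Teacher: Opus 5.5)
Your proposal is correct and follows the same route as the paper, which obtains this corollary directly from the Frobenius reduction $|\mathcal{P}_{\dd,ip}|=|\mathcal{P}_{\dd,i}|$ (iterated down to $r'$ with $p\nmid r'$) followed by the two preceding propositions. Your write-up also makes explicit two points the paper leaves implicit: the auxiliary quantities $|\overline{\mathcal{P}_{(n,2,1),r}}|$ and $|\overline{\mathcal{P}_{(n,1,1),r}}|$ on the right-hand side transfer as well (via the affine/projective count relation), and $r=1=p^0$ must be read as falling under the second case.
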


\section{Shallow architectures with $char(\F_q)\,|\, r$}
\label{sec:shallow-p|r}
In this section, we consider architectures where the activation degree $r$ of the activation function $\sigma$ is divisible by the prime characteristic of a finite field $F$. Our first proposition equates the cardinality of the neuromanifold with $r=ip$ for some $i\in\mathbb{N}$ to the cardinality of the neuromanifold with $r=i$. In other words, we have the following result. 
\begin{proposition} 
\label{prop:architecture-M_dd_ip_eq_M_dd_i}
    If $\dd=(n,m,k),\ char(\F) = p$, $\sigma(x) = x^r$ where $r = ip$ for $i\in\mathbb{N}$, then
    \[
        |{\mathcal{P}}_{\dd,r}|=|{\mathcal{P}}_{\dd,i}|.
    \]
\end{proposition}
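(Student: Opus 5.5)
The plan is to realize the passage from $r=i$ to $r=ip$ as an injective map on polynomial tuples that carries one neuromanifold onto the other. Over a field of characteristic $p$ the Frobenius gives the identity
\[
  (a_{s1}x_1+\dots+a_{sn}x_n)^{ip} = \bigl((a_{s1}x_1+\dots+a_{sn}x_n)^{i}\bigr)^{p} = \bigl(L_s(\xx)^{i}\bigr)^{p}.
\]
Expanding $L_s^i=\sum_{|\alpha|=i} c_\alpha x^\alpha$ and applying the freshman's dream in $\F_q[x_1,\dots,x_n]$ yields
\[
  L_s^{ip}=\sum_{|\alpha|=i} c_\alpha^{\,p}\, x^{p\alpha}.
\]
Consequently each output satisfies $f_{j,\ww}^{(ip)}=\sum_s b_{js}\,\sum_\alpha c_{s,\alpha}^p x^{p\alpha}$.

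The first key step is to absorb the Frobenius twist into reparametrized weights, which uses that $\F_q$ is finite. The Frobenius $\phi(a)=a^p$ is a field automorphism of $\F_q$, so it is a bijection. Define the substitution $\theta:\F_q[\xx]\to\F_q[\xx]$ on homogeneous degree-$i$ forms by $g=\sum_\alpha g_\alpha x^\alpha\mapsto \sum_\alpha g_\alpha x^{p\alpha}$. This map is $\F_q$-linear and injective, since distinct $\alpha$ give distinct $p\alpha$. The identity above then reads
\[
  L_s^{ip} = \theta\bigl(\phi(L_s^i)\bigr),
\]
where $\phi$ acts on coefficients. Moreover $\phi(L_s^i)=(\phi(L_s))^i$, with $\phi(L_s)$ the linear form whose coefficients are $a_{st}^p$.

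The second step combines these facts. We get
\[
  f^{(ip)}_{j,\ww}=\theta\Bigl(\sum_s b_{js}\,\phi(L_s)^i\Bigr)=\theta\bigl(f^{(i)}_{j,\ww'}\bigr),
\]
where $\ww'$ has first-layer weights $a_{st}^p$ and the same $W_2$. Because $\phi$ is bijective on $\F_q$, the map $\ww\mapsto\ww'$ is a bijection of the parameter space $\F_q^{m(n+k)}$. It follows that
\[
  \mathcal P_{\dd,ip}=\theta^{\times k}\bigl(\mathcal P_{\dd,i}\bigr).
\]
Since $\theta^{\times k}$ is injective, the two sets have the same cardinality.

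The main point to check carefully is that $W_2$ need not be twisted. The identity works because the Frobenius is applied only inside the power and not to the $b_{js}$; this is fine, since $\theta$ is linear and the $b_{js}$ enter linearly after $\theta$. One should also verify that the injectivity of $\theta$ is really exponent injectivity, with no cancellation, and that surjectivity of $\phi$ genuinely requires a finite (or perfect) field. This matches the statement, which is stated for $\mathcal P$ over $\F_q$.
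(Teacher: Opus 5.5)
Your proof is correct and follows essentially the same route as the paper's. Both arguments use the Frobenius identity $L_s^{ip}=\bigl(\sum_t a_{st}^p x_t^p\bigr)^i$, use perfectness of $\F_q$ to reparametrize the first-layer weights while leaving $W_2$ unchanged, and use the substitution $x_t\mapsto x_t^p$ (your $\theta$) to identify $\mathcal{P}_{\dd,ip}$ with a copy of $\mathcal{P}_{\dd,i}$. Your explicit check that $\theta$ is linear and injective makes rigorous a step the paper only uses implicitly, when it sets $y_t=x_t^p$ and calls the two expressions ``exactly the same.''
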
 
\begin{proof} 
    As we defined above, let $a_{ij}$ be the $(i,j)$th entry of $W_1$. Direct computation shows that $k$th entry of $\sigma_1 W_1(\xx)$ is equal to 
    \[
        (\sigma_1 W_1(\xx))_k\coloneqq \left ( \sum_{i=1}^n a_{ki}x_i \right )^{ip}
    \]
    Since $p$ is prime, then we have the following identity
    \[
        (a+b)^p \equiv a^p+b^p\text{ for all }a,b\in\F.
    \]
    Computing the $k$th term, for example, gives that
    \[
        \left (\sum_{i=1}^n a_{ki}x_i\right)^p = \left (a_{k1}x_1 + \sum_{i=2}^n a_{ki}x_i \right)^p = (a_{k1}x_1)^p + \left (\sum_{i=2}^n a_{ki}x_i\right)^p
    \]
    If we now recursively simplify the last term, we will eventually get the following:
    \[
        \left (\sum_{i=1}^n a_{ki}x_i\right)^p = \sum_{i=1}^n a_{ki}^px_i^p
    \]
    Thus, 
    \[
        \left (\sum_{i=1}^n a_{ki}x_i  \right)^{ip} = \left (\sum_{i=1}^n a_{ki}^px_i^p\right )^i 
    \]
    The same computation done with $r = i$ will give us the same expression but with $a_{ki}x_i$ replacing $a_{ki}^px_i^p$. It is immediate that the number of polynomials in $\mathcal{P}_{\dd,ip}$ is at most the number of polynomials in $\mathcal{P}_{\dd,i}$ since $a_{ki}^p \in \F$ for all any $a_{ki} \in \F$. Thus, to show that $|\mathcal{P}_{\dd,ip}| =   |\mathcal{P}_{\dd,i}|$, it suffices to prove that every element of $\F$ is a $p$th power, to allow the coefficient before the $x_i^{p}$ term to be any element from $\F$. Fortunately, it is a standard result in algebra that all finite fields are perfect~\citep[Theorem~3]{CONRAD1995}, which means that for all $b \in \F$, there exists $a\in \F$ such that $b = a^p$, if $char(\F) = p$. Now consider arbitrary $b_{ki} \in \F$; we can choose $a_{ki} \in \F$ such that $b_{ki} = a_{ki}^p$. Let $y_i$ denote $x_i^p$. Then we have
     \[
     (\sigma_1 W_1(\xx))_k = \left (\sum_{i=1}^n b_{ki}y_i\right )^i 
     \]
     Now we observe that this is exactly the same as $(\sigma_1 W_1(\xx))_k$ when computed with $r = i$ instead. Since all polynomials are in the form of $W_2(\sigma_1(W_1(\xx)))$, and $W_2$ does not depend on $r$, we can conclude that $|\mathcal{P}_{\dd,ip}| =   |\mathcal{P}_{\dd,i}|$.
\end{proof}

\begin{corollary}
\label{cor: architecture - get rid of ps in r prime decomposition}
    If $\dd=(n,m,k),\ char(\F) = p$, $\sigma(x) = x^r$ where $r = ip^\ell$ for $\ell,i\in\mathbb{N}$ and $gcd(p,i)= 1$, then 
    \[
        |{\mathcal{P}}_{\dd,r}|=|{\mathcal{P}}_{\dd,i}|.
    \]
\end{corollary}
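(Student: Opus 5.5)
The plan is to prove the corollary by induction on $\ell$, using Proposition~\ref{prop:architecture-M_dd_ip_eq_M_dd_i} as the inductive step. The hypotheses on $\dd$ and $\F$ do not change along the way, and that proposition holds for every $i\in\mathbb{N}$, in particular for composite multiples of $p$.

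\emph{Base case.} For $\ell=1$ we have $r=ip$, so Proposition~\ref{prop:architecture-M_dd_ip_eq_M_dd_i} gives $|\mathcal{P}_{\dd,ip}|=|\mathcal{P}_{\dd,i}|$ directly. (If one allows $\ell=0$, the claim is trivial.)

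\emph{Inductive step.} Assume $\ell\ge 2$ and that the statement holds for $\ell-1$. Write
\[
r=ip^{\ell}=(ip^{\ell-1})\cdot p .
\]
Applying Proposition~\ref{prop:architecture-M_dd_ip_eq_M_dd_i} with $i$ replaced by $i'=ip^{\ell-1}\in\mathbb{N}$ gives $|\mathcal{P}_{\dd,r}|=|\mathcal{P}_{\dd,ip^{\ell-1}}|$. The inductive hypothesis then gives $|\mathcal{P}_{\dd,ip^{\ell-1}}|=|\mathcal{P}_{\dd,i}|$. Chaining the two equalities proves the claim for $\ell$.

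The condition $\gcd(p,i)=1$ is not used in the argument. It only ensures that $i$ is the canonical reduced exponent, i.e.\ $r$ with all factors of $p$ stripped off. The one point to check is that Proposition~\ref{prop:architecture-M_dd_ip_eq_M_dd_i} genuinely applies to arbitrary $i$, including multiples of $p$. It does: its proof uses only the Frobenius identity $(\sum a_{kj}x_j)^{p}=\sum a_{kj}^{p}x_j^{p}$ and the surjectivity of $a\mapsto a^p$ on the finite field, neither of which depends on $i$.

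So I expect no real obstacle. One could instead argue directly with the iterated Frobenius $a\mapsto a^{p^\ell}$, which is also bijective on $\F_q$. Under the substitution $y_j=x_j^{p^\ell}$, the outputs for exponent $r$ correspond bijectively to those for exponent $i$. The induction is cleaner because it reuses the stated proposition verbatim.
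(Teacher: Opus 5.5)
Your proposal is correct and matches the paper's own proof, which simply applies Proposition~\ref{prop:architecture-M_dd_ip_eq_M_dd_i} inductively, removing one factor of $p$ at a time; your induction on $\ell$ writes out exactly that argument. You are also right that $\gcd(p,i)=1$ is not used: it only makes $i$ the $p$-free part of $r$.
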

\begin{proof}
    Apply Proposition~\ref{prop:architecture-M_dd_ip_eq_M_dd_i} inductively to $|{\mathcal{P}}_{\dd,r}|$.
\end{proof}
The case $s=1$, i.e., $r = p^\ell$ for some positive integer $\ell$, can be computed explicitly as shown below.
\begin{proposition}\label{prop: r = p^i}
    If $\dd = (n,m,k)$, $\operatorname{char}(\F) = p$, and $\sigma(x) = x^r$ with $r = p^\ell$ for some positive integer $\ell$, then $|\mathcal{P}_{\dd,r}|$ is precisely the number of $k \times n$ matrices over $\F$ of rank at most $m$.
\end{proposition}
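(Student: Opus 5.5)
By Corollary~\ref{cor: architecture - get rid of ps in r prime decomposition} (with $i=1$), it suffices to treat $r=1$. Equivalently, we can run the Frobenius computation from the proof of Proposition~\ref{prop:architecture-M_dd_ip_eq_M_dd_i} directly. For $r=p^\ell$ and any linear form $L_s(\xx)=\sum_j a_{sj}x_j$ we have $L_s(\xx)^{p^\ell}=\sum_j a_{sj}^{p^\ell}x_j^{p^\ell}$. Hence every output
\[
f_{i,\ww}(\xx)=\sum_{j=1}^n\Bigl(\sum_{s=1}^m b_{is}a_{sj}^{p^\ell}\Bigr)x_j^{p^\ell}
\]
lies in the $n$-dimensional subspace of $S^r(\F^n)$ spanned by the pure powers $x_1^{p^\ell},\dots,x_n^{p^\ell}$. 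This agrees with Proposition~\ref{lem:global_count_k=1}, since $\gamma_{n,p}(p^\ell)=n$.

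Next, identify a $k$-tuple of such polynomials with the $k\times n$ coefficient matrix $C=(c_{ij})$, where $c_{ij}$ is the coefficient of $x_j^{p^\ell}$ in the $i$-th output. Distinct monomials are linearly independent in $\F_q[x_1,\dots,x_n]$, so this identification is injective. Under it the output of the network is exactly
\[
C=W_2\,W_1^{[p^\ell]},
\]
where $W_1^{[p^\ell]}$ denotes $W_1$ with the Frobenius $a\mapsto a^{p^\ell}$ applied entrywise. So $\mathcal{P}_{\dd,r}$ is in bijection with the set of products $W_2 V$ with $V\in\F^{m\times n}$ of the form $W_1^{[p^\ell]}$.

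The key step is the equality of this set of products with the set of $k\times n$ matrices of rank $\le m$. Since $\F_q$ is perfect (as already used in Proposition~\ref{prop:architecture-M_dd_ip_eq_M_dd_i}), the Frobenius is a bijection of $\F_q$, so $W_1^{[p^\ell]}$ ranges over all of $\F^{m\times n}$ as $W_1$ does. The image is therefore $\{W_2V : W_2\in\F^{k\times m},\ V\in\F^{m\times n}\}$.

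\begin{itemize}
\item Any such product has rank at most $m$, because it factors through $\F^m$.
\item Conversely, if $C$ has rank $j\le m$, take a rank factorization $C=UV'$ with $U\in\F^{k\times j}$ and $V'\in\F^{j\times n}$. Pad $U$ with $m-j$ zero columns and $V'$ with $m-j$ zero rows; this writes $C$ as $W_2V$ with the required sizes.
\end{itemize}

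This proves the statement. Counting gives $\sum_{j=0}^{\min(m,n,k)}N_q(n,k,j)$, which equals $q^{kn}$ once $m\ge\min(n,k)$ and matches the table entry.

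No step is a real obstacle. The points needing care are the injectivity of the coefficient identification, which requires that the $k$ outputs be compared as polynomials rather than as functions, and the surjectivity of Frobenius. Both hold for finite fields and fail for, say, non-perfect fields, which is why the statement is restricted to finite fields.
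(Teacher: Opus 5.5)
Your proof is correct and follows the paper's route: reduce to $r=1$, either via Corollary~\ref{cor: architecture - get rid of ps in r prime decomposition} or via your direct Frobenius computation $C=W_2W_1^{[p^\ell]}$ together with surjectivity of Frobenius on $\F_q$; then identify the image with the $k\times n$ matrices of rank at most $m$ using the rank-factorization fact that the paper cites from Axler and that you prove directly by zero-padding. (A minor point on your closing remark: only the surjectivity of Frobenius uses perfectness, while the injectivity of the coefficient identification holds over any field.)
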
 

Before we begin the proof, we state the following useful lemma:
\begin{lemma}[{\citet[Theorem~3.56]{AXLER15}}] \label{lem: matrix decomposition rank <= m}
    A matrix $C \in \F^{k \times n}$ is representable as a product of two matrices $B \times A$, where $B \in \F^{k \times m}$, $A \in \F^{m \times n}$ if and only if $C$ has rank $\leq m$.
\end{lemma}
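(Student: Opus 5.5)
The plan is to prove the two directions separately, using only the column-space description of rank. Throughout, write $\operatorname{col}(X)$ for the column space of a matrix $X$, so that $\operatorname{rank}(X)=\dim\operatorname{col}(X)$.

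\emph{Only if.} Suppose $C=BA$ with $B\in\F^{k\times m}$ and $A\in\F^{m\times n}$. The $j$-th column of $C$ is $B$ applied to the $j$-th column of $A$. It is therefore a linear combination of the $m$ columns of $B$. Hence $\operatorname{col}(C)\subseteq\operatorname{col}(B)$, which gives $\operatorname{rank}(C)\le\operatorname{rank}(B)\le m$, since $\operatorname{col}(B)$ is spanned by $m$ vectors.

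\emph{If.} Suppose $\operatorname{rank}(C)=s\le m$. If $s=0$, then $C=0$ and we take $B=0$ and $A=0$.

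Otherwise, choose a basis $c_1,\dots,c_s\in\F^k$ of $\operatorname{col}(C)$ and let $B_0\in\F^{k\times s}$ be the matrix with these columns. Each column $C_{\ast j}$ of $C$ lies in $\operatorname{col}(C)$, so it has a unique expansion $C_{\ast j}=\sum_{t=1}^s \alpha_{tj}c_t$. Let $A_0=(\alpha_{tj})\in\F^{s\times n}$. Then $C=B_0A_0$ holds column by column.

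Finally, we pad to the required sizes. Let $B=[\,B_0\mid 0_{k\times(m-s)}\,]$, and let $A$ be $A_0$ with $m-s$ zero rows appended. Block multiplication gives $BA=B_0A_0+0=C$.

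No step is a genuine obstacle. The only point needing care is the padding, so that the factorization has inner dimension exactly $m$ rather than $s$. This works over any field, in particular over $\F_q$, because only linear algebra over a field is used.
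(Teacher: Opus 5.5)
Your proof is correct, and it is essentially the standard argument behind the cited result. The paper gives no proof of its own beyond the reference to Axler's column--row factorization, which likewise builds $B$ from a basis of the column space of $C$ and reads off $A$ as the coefficient matrix. Your explicit ``only if'' direction ($\operatorname{col}(BA)\subseteq\operatorname{col}(B)$) and the zero-padding to inner dimension exactly $m$ supply the details that the citation leaves implicit.
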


\begin{proof}[Proof of Proposition \ref{prop: r = p^i}]

By Corollary \ref{cor: architecture - get rid of ps in r prime decomposition}, we have that for all $i \in \mathbb{N}$, $|{\mathcal{P}}_{\dd,p^i}| = |{\mathcal{P}}_{\dd,1}|$. Now we need to compute the cardinality of $\mathcal{P}_{\dd,1}$, where the network output is given by 
\[
    f_{\ww}(\xx)=W_2\sigma_1 W_1\xx = W_2W_1\xx. 
\]
By direct computations, we see that in order to learn $f(\xx)=C\xx$, we must have the following decomposition: 
\[
    C = W_2\times W_1.
\]
where $C \in \F^{k \times n}, W_2 \in \F^{k \times m}$, and $W_1 \in \F^{m \times n}$. Thus, according to Lemma \ref{lem: matrix decomposition rank <= m}, the neuromanifold exactly corresponds to the set of matrices with rank at most $m$. 
\end{proof}

\begin{corollary} 
\label{cor: char(F) = p, m >= n or k}
    If $char(\F_q)=p$, $r = p^\ell$, and $N_q(n,k,j)$ is defined as in~\Cref{eq: k x n matrices of rank m} to be the number of  the number of $k\times n$ matrices of rank $m$ over a finite field $\F_q$, then $$|\mathcal{P}_{\dd,r}| = \min\left(q^{k \times n}, \sum_{j = 1}^m N_q(n,k,j) + 1\right),$$
\end{corollary}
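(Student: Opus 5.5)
The plan is to reduce the count to the matrix description in Proposition~\ref{prop: r = p^i} and then count matrices by rank. By that proposition, $|\mathcal{P}_{\dd,r}|$ equals the number of $k\times n$ matrices over $\F_q$ of rank at most $m$. We split this set according to the exact rank $j\in\{0,1,\dots,m\}$. Rank $0$ contributes only the zero matrix, which accounts for the summand $+1$. For each $1\le j\le \min(m,n,k)$, \Cref{eq: k x n matrices of rank m} gives exactly $N_q(n,k,j)$ matrices. Since the strata are disjoint, we get
\[
    |\mathcal{P}_{\dd,r}| \;=\; 1+\sum_{j=1}^{\min(m,n,k)} N_q(n,k,j).
\]

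Next we reconcile this with the stated formula, which sums up to $m$ and takes a minimum with $q^{kn}$. There are two cases.

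If $m<\min(n,k)$, every summand $N_q(n,k,j)$ with $j\le m$ is a genuine count, and the sum is a count of a proper subset of $\F_q^{k\times n}$. It is therefore strictly less than $q^{kn}$, since full-rank matrices are omitted, so the minimum selects the sum.

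If $m\ge\min(n,k)$, all matrices have rank at most $\min(n,k)\le m$. The neuromanifold is then all of $\F_q^{k\times n}$, so its cardinality is $q^{kn}$. Here we must check that the formal sum up to $m$ is at least $q^{kn}$, so that the minimum returns $q^{kn}$. For $j>\min(n,k)$, the $q$-binomial $\qbinom{k}{j}$ vanishes when $j>k$ because of the factor $1-q^{0}$. When $j>n$, the product $(q^n-1)\cdots(q^n-q^{j-1})$ contains the factor $q^n-q^n=0$. So the extra terms are zero, and the sum equals $\sum_{j\le\min(n,k)}N_q(n,k,j)+1=q^{kn}$, because it counts all matrices. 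In either case the minimum agrees with the exact count.

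The main obstacle is this bookkeeping: confirming that the terms of \Cref{eq: k x n matrices of rank m} with $j$ beyond $\min(n,k)$ really vanish, rather than being undefined or spurious, so that the minimum is justified in the second case. Once that is settled, the rest is a direct disjoint-union count on top of Proposition~\ref{prop: r = p^i}. Optionally, one can add a sanity check that $\sum_{j=0}^{\min(n,k)}N_q(n,k,j)=q^{kn}$, which is the standard partition of $\F_q^{k\times n}$ by rank.
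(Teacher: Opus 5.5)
Your proof is correct and follows the same route as the paper: it invokes Proposition~\ref{prop: r = p^i} to identify $\mathcal{P}_{\dd,r}$ with the $k\times n$ matrices of rank at most $m$, then splits into the cases $m<\min(n,k)$ and $m\ge\min(n,k)$. Your check that $N_q(n,k,j)=0$ for $j>\min(n,k)$ fills in a step the paper leaves implicit, namely why the minimum returns $q^{kn}$ in the second case. It also shows the sum never exceeds $q^{kn}$, so the $\min$ is actually redundant.
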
 
\begin{proof}
    By Proposition \ref{prop: r = p^i}, we find that $|\mathcal{P}_{\dd,1}|$ is the number of all $k \times n$ matrices with rank $\leq m$.
    
    If $m \geq \min(n,k)$, then since all matrices of size $k \times n$ have rank at most $\min(n,k)$ and $m \geq \min(n,k)$, all $k \times n$ matrices satisfy the condition of having rank at most $m$. Thus, by Proposition \ref{prop: r = p^i}, $|\mathcal{P}_{\dd,r}| = q^{k \times n}$.
    
    By~\Cref{eq: k x n matrices of rank m}, we have that $N_q(n,k,j)$ is the number of $k \times n$ matrices with rank exactly $j$. Thus, if $m < \min(n,k)$, $|\mathcal{P}_{\dd,r}| =  \sum_{j=1}^m N_{q}(n,k,j) + 1$, where $1$ corresponds to the zero matrix, which has rank $0$. And this expression is indeed smaller than $p^{k \times n}$ since the latter expression also includes matrices with rank $ > m $ but $ \leq \min(n,k)$
\end{proof}

\section{Conclusions} 
\label{sec:conclusion}

    \textbf{Summary.} In this work, we developed an algebraic framework for analyzing shallow PNNs over finite fields. By formulating algebraic expressivity in terms of the cardinality of neuromanifolds, we established general upper bounds, lower bounds, and explicit formulae for a range of architectures. We also highlighted differences between neuromanifolds defined over finite fields and their counterparts over $\mathbb{C}$, particularly in cases where finite field structures constrain the filling property.

    \textbf{Future work.} A natural extension of our work is to analyze deeper architectures (with multiple hidden layers) to investigate how the composition of several layers affects the size and geometry of the neuromanifolds. Another direction includes the study of more general activations. For example, in~\citet{SHAPIRA2023} shows that deep ReLU networks are more efficient than shallow ones at expressing homogeneous polynomials, with fewer units needed for the $d$-variable product function as depth increases. By analogy, it would be interesting in future work to study whether deeper PNNs similarly outperform shallow ones over finite fields.

    The projective cardinalities introduced and computed in Section~\ref{sec:shallow-m=1-m=2} can also be combined with the extension of the scalars from $\F_q$ to $\overline{\F}_q$ and the Weil conjectures to get new insights into their topological structure, the Zariski closure of neuromanifolds over $\C$.
    
    In addition, we identified several open problems throughout our main sections that merit further examination, such as the question of simultaneous diagonalization of tensors over finite fields.

    \textbf{Conclusion.} Our results provide evidence that recent algebraic approaches can play an important role in understanding the expressivity of neural networks, complementing the more familiar statistical and probabilistic perspectives.

\printcredits
\bibliographystyle{cas-model2-names}
\bibliography{FFT_2026_spring/cas-refs-main} 

\appendix
\section{Appendix}

\subsection{Projective space definition}
\label{apx: proj space def}

An affine space $\F^{n}$ is the $n$-fold Cartesian product of $\F$. A projective space $\P^{n-1}(\F)$ is the set 
    \[
        \P^{n-1}(\F) := (\F^{n}-\{(0,\dots,0)\})/\sim 
    \]
    where $(x_1,\dots,x_n)\sim(y_1,\dots,y_n)$ if and only if there exists a non-zero scalar $\lambda\in\F$ such that $x_i=\lambda y_i$ for all $i$. Intuitively, projective space treats two vectors as the same if they lie on the same line through the origin. Thus, it can be viewed as the set of all lines through the origin in $\F^{n}$. A point in the projective space $\P^{n-1}(\F)$ will be indicated by $[x_1:\dots:x_n]$ where
    \[
        [x_1 : \cdots : x_n]\coloneq\left\{(\lambda x_1 , \cdots , \lambda x_n) : \lambda \in (\F-\{0\})\right\}.   
    \]

\subsection{Multinomial Congruence}

\begin{theorem}[Multinomial Congruence,~\citep{CONRAD1995}]
\label{thm:lucas_multinomial}
    Let $p$ be a prime. Fix integers $d\ge 0$ and $t\ge 1$. Suppose $s_0,s_1,\dots ,s_t\ge 0$. Write
    \[
        s_0=c_0+c_1p+\dots +c_dp^{d},\qquad
        0\le c_i\le p-1\ (i<d),
    \]
    \[
        s_j=c_{0j}+c_{1j}p+\dots +c_{dj}p^{d},\qquad
        0\le c_{ij}\le p-1\ (i<d,\;1\le j\le t),
    \]
    with $c_d,c_{dj}\ge 0$. Then
    \[
        \binom{s_0}{s_1,\dots ,s_t}\equiv
        \binom{c_0}{c_{01},\dots ,c_{0t}}\;
        \binom{c_1}{c_{11},\dots ,c_{1t}}\;\cdots\;
        \binom{c_d}{c_{d1},\dots ,c_{dt}}
        \pmod p.
    \]
\end{theorem}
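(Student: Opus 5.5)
The statement to prove is the last one before the appendix, the Multinomial Congruence Theorem~\ref{thm:lucas_multinomial}. The plan is to prove it via polynomial identities over $\F_p[x_1,\dots,x_t]$, in the style of Lucas' theorem, rather than through factorial computations.

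The key input is the Frobenius identity $(y_1+\dots+y_t)^p = y_1^p+\dots+y_t^p$ in characteristic $p$. This is the same identity already used in the proof of Proposition~\ref{prop:architecture-M_dd_ip_eq_M_dd_i}. Iterating it gives $(x_1+\dots+x_t)^{p^i} = x_1^{p^i}+\dots+x_t^{p^i}$ for every $i\ge 0$.

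Write $s_0=\sum_{i=0}^d c_i p^i$. Then
\[
(x_1+\cdots+x_t)^{s_0}=\prod_{i=0}^{d}\bigl(x_1^{p^i}+\cdots+x_t^{p^i}\bigr)^{c_i}.
\]
Expand each factor by the ordinary multinomial theorem. The factor indexed by $i$ becomes
\[
\sum_{e_{i1}+\dots+e_{it}=c_i}\binom{c_i}{e_{i1},\dots,e_{it}}\,x_1^{e_{i1}p^i}\cdots x_t^{e_{it}p^i}.
\]
Next, compare coefficients of the monomial $x_1^{s_1}\cdots x_t^{s_t}$ on both sides. We may assume $s_1+\dots+s_t=s_0$; otherwise both sides of the claimed congruence vanish. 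On the left, the coefficient is $\binom{s_0}{s_1,\dots,s_t}$ reduced mod $p$. On the right, a term contributes only when $\sum_i e_{ij}p^i=s_j$ for each $j$. The result follows once we show the only such choice is $e_{ij}=c_{ij}$, because that choice contributes exactly $\prod_i\binom{c_i}{c_{i1},\dots,c_{it}}$.

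This uniqueness step is the main obstacle, and it needs care for two reasons. First, the top digits $c_d$ and $c_{dj}$ are allowed to be $\ge p$. Second, the lower $e_{ij}$ are only bounded by $c_i\le p-1$, so uniqueness of base-$p$ representations has to be invoked correctly.

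For $i<d$, each $e_{ij}\le c_i\le p-1$ is a legitimate base-$p$ digit. Reducing $s_j=\sum_i e_{ij}p^i$ modulo $p, p^2,\dots$ in turn recovers the lower digits: $e_{0j}=c_{0j}$, then $e_{1j}=c_{1j}$, and so on up to $i=d-1$. Subtracting these, the remaining top coefficient is forced, $e_{dj}=(s_j-\sum_{i<d}c_{ij}p^i)/p^d=c_{dj}$. So exactly one term matches the monomial, and its coefficient is the product of one-digit multinomials.

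One degenerate case needs a remark. If the digits $c_{ij}$ for some $i$ do not sum to $c_i$, then no term on the right matches. Both sides are then $0$ mod $p$, since a multinomial coefficient with mismatched sum is $0$ by convention. This completes the comparison of coefficients in $\F_p$, which is the claimed congruence.
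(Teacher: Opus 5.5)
Your proof is correct. The paper itself gives no proof of this theorem. It is quoted from Conrad (1995) and used only once, in the proof of Proposition~\ref{lem:global_count_k=1}. There it is needed only in the special case where every digit, including the top ones, is at most $p-1$. So your argument supplies a justification the paper omits, and does so in the full generality stated.

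The Frobenius/generating-function route you chose suits that generality well:
\begin{itemize}
\item The factorization $(x_1+\cdots+x_t)^{c_dp^d}=(x_1^{p^d}+\cdots+x_t^{p^d})^{c_d}$ holds whether or not $c_d<p$.
\item The uniqueness step only needs the exponents at positions $i<d$ to be genuine digits. You correctly obtain this from $e_{ij}\le c_i\le p-1$.
\item The cases where some $\sum_j c_{ij}\neq c_i$ fall out of the coefficient comparison without separate case analysis.
\end{itemize}
A factorial-based argument, or a reduction to the binomial Lucas theorem via $\binom{s_0}{s_1,\dots,s_t}=\binom{s_0}{s_1}\binom{s_0-s_1}{s_2}\cdots$, would instead have to track borrows in the successive subtractions.

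One small point. You assert without justification that the right-hand side vanishes when $s_1+\cdots+s_t\neq s_0$. This is true: if $\sum_j c_{ij}=c_i$ held for every $i$, then summing $s_j=\sum_i c_{ij}p^i$ over $j$ would give $\sum_j s_j=s_0$. The reduction is unnecessary anyway. Under the same zero convention, the coefficient of $x_1^{s_1}\cdots x_t^{s_t}$ in $(x_1+\cdots+x_t)^{s_0}$ is $\binom{s_0}{s_1,\dots,s_t}$ in every case, so your coefficient comparison already covers it.
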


\begin{example}
    If $\dd=(2,1,1), r=2$, and $p=2$, then the output of the neural network is equal to
    \[
        f_{\ww}(\xx)=b_{11}(a_{11}x_1+a_{12}x_2)^2 =b_{11}a_{11}^2x_1^2+2b_{11}a_{11}a_{12}x_1x_2+ b_{11}a_{12}^2x_2^2 = b_{11}a_{11}^2x_1^2+ b_{11}a_{12}^2x_2^2.
    \]
    Observe that $r = 0\cdot2^0+1\cdot2^1$, so $r_0=0$, $r_1 = 1$, and $s_r=1$. Then we have
    \[
        \gamma_{2,2}(2)={r_0+2-1 \choose r_0}{r_1+2-1 \choose r_1}={1 \choose 0}{2 \choose 1}=2.
    \]
    This exactly gives us $|\mathcal{P}_{(2,1,1),2}|\leq 2^2$. 
\end{example}

\end{document}